\documentclass[a4paper,12pt,twoside]{amsart}
\setlength{\textwidth}{15.6cm}
\setlength{\textheight}{23.1cm}
\setlength{\oddsidemargin}{-0.7cm}
\setlength{\evensidemargin}{-0.7cm}
\setlength{\topmargin}{-0.15cm}
\setlength{\hoffset}{ 1cm}

\usepackage{amsmath,amsthm}
\usepackage{amsfonts}
\usepackage{amssymb}
\usepackage[english]{babel}

\newtheorem{theo}{Theorem}[section]
\newtheorem{lem}[theo]{Lemma}
\newtheorem{prop}[theo]{Proposition}
\newtheorem{cor}[theo]{Corollary}

\newcommand{\e}{{\text{e}}}

\baselineskip 16pt
\begin{document}

\title[Doeblin's condition, $\rho$-mixing and spectra of convolutions]
{Doeblin's condition, $\rho$-mixing and spectra \\ of convolution operators on the circle}

\author{Guy Cohen}
\address{School of electrical engineering, Ben-Gurion University, Beer-Sheva, Israel}
\email{guycohen@bgu.ac.il}

\author{Michael Lin}
\address{Department of Mathematics, Ben-Gurion University, Beer-Sheva, Israel}
\email{lin@math.bgu.ac.il}

\subjclass[2010]{Primary: 60J05, 47A35, 42B10, 47A10. 43A05
Secondary: 47B38, 42A55, 42A85, 43A15}
\keywords{Markov operators, convolution operators, Doeblin's condition, 
uniform ergodicity, $\rho$-mixing chains, spectrum, spectral gap}

\dedicatory{\large Dedicated to the memory of Do\v gan \c C\"omez}

\begin{abstract}
We study the asymptotic behavior of Markov operators $P_\mu$ defined by convolution
with a probability measure $\mu$ on the unit circle $\mathbb T$. We prove that when
$\mu$ is adapted, $P_\mu$ satisfies Doeblin's condition if and only if some power $\mu^k$
is non-singular. We give an example of a symmetric probability measure $\mu$ on $\mathbb T$,
such that the reversible stationary chain induced by  $P_\mu$  is $\rho$-mixing, but $P_\mu$
 does not satisfy Doeblin's condition. We look at  the spectra of $P_\mu$ in the different $L_p$ 
spaces when $P_\mu$ is, or is not, $\rho$-mixing.
\end{abstract}

\maketitle

\section{Introduction}

Let $P(x,A)$ be a transition probability on a general measurable (state) space $(S,\Sigma)$,
with invariant probability $m$, i.e. $m(A) =\int_S P(x,A) dm(x)$ for $A \in \Sigma$.
The Markov operator $Pf(x):=\int_S f(y)P(x,dy)$, defined for bounded measurable $f$,
extends to all $f \in L_1(m)$, and for $1\le p\le\infty$, $P$ is also a contraction of $L_p(m)$.
We assume the chain to be ergodic, i.e. $Pf=f \in L_\infty(m)$ implies $f$ is constant a.e.
We denote by $Ef =\int_S f\,dm$ the expectation in $(S,\Sigma,m)$.
The $n$-step trasition is given by $P^{(n)}(x,A)=P^n1_A(x)$.
\smallskip

Rosenblatt \cite[p. 207]{Ro} proved that a stationary ergodic Markov chain with transition
operator $P$ is $\rho$-mixing\footnote{See definition, as "asymptotically uncorrelated
chain", in \cite[p. 206-207]{Ro}} if and only if
\begin{equation} \label{rho}
	\lim_{n \to \infty} \|P^n-E\|_2 = 0.
\end{equation}
It follows that if $P$ satisfies \eqref{rho}, so does $P^*$, and the time-reversed
chain is also $\rho$-mixing.
Rosenblatt \cite{Ro} used \eqref{rho} to prove the CLT for every centered square-integrable
additive real functional on the chain.

Rosenblatt \cite[p. 211]{Ro} proved that if $\|P^n-E\|_r \to 0$ for some $1\le r\le \infty$,
then $\|P^n-E\|_p \to 0$ for every $1<p< \infty$, and showed that $\|P^n-E\|_\infty \to 0$
is equivalent to Doeblin's condition\footnote{Which is: "There exists $n\ge 1$ and 
$\varepsilon \in(0,1)$ such that $m(A) <1-\varepsilon$ implies 
$\sup_x P^{(n)}(x,A) \le \varepsilon$."} for $P$. 
By duality, $\|P^n-E\|_1$ is equivalent to Doeblin's condition for $P^*$. 
The example in \cite[pp. 213-214]{Ro} (on a countable space) shows $\|P^n-E\|_\infty\to 0$ 
while $ \lim_n\|P^n-E\|_1= \lim_n\|P^{*n}-E\|_\infty  >0$. A different example (on $[0,1]$)
was later given in \cite[pp. 316-318]{BR}.
An example of $P$ symmetric in $L_2(m)$ with $\|P^n-E\|_2 \to 0 $, which does not satisfy
Doeblin's condition, i.e. $\lim_n\|P^n-E\|_\infty >0$, is indicated in \cite[p. 214]{Ro}.

Rosenblatt \cite[Lemma 1, p. 200]{Ro} proved that a stationary ergodic Markov chain is
$\alpha$-mixing\footnote{See definition, as "strongly mixing chain", in \cite[p. 194]{Ro}.}
if and only if 
\begin{equation} \label{alpha}
	\alpha_n:=\underset{\overset{\|f\|_\infty\le 1}{Ef=0}}\sup \|P^n f\|_1 \to 0.
\end{equation}
In general, $\rho$-mixing implies $\alpha$-mixing \cite[Lemma 3, p.200]{Ro}, but the converse 
may fail.
\smallskip
	
For $P$ a convolution operator on the unit circle, we study in this note the convergence 
$\|P^n-E\|_p \to 0$ and its possible relation with the spectra of $P$ in the different 
$L_p$ spaces. We present a symmetric $P$ with $\|P^n-E\|_2 \to 0$ and 
$\lim_n \|P^n-E\|_\infty>0$.

\medskip

\section{Convolution operators on the unit circle}

Let $\mathbb T$ be the circle group, with Borel field $\mathcal B$ and normalized
Lebesgue measure $m$. A probability measure $\mu$ on $(\mathbb T,\mathcal B)$ defines a
transition probability $P(x,A):=\mu(Ax^{-1})$, with Markov operator
$P_\mu f(x)=\int f(xy)\mu(dy)$ and invariant probability $m$; thus
$P_\mu f=\check\mu*f$, where  $\check\mu(A)=\mu(A^{-1})$ for $A \in\mathcal B$.
Since $\mathbb T$ is an Abelian group, $P_\mu$ is a normal operator on $L_2(\mathbb T,m)$.
When $\mu$ is symmetric (i.e. $\check\mu=\mu$), $P_\mu$ is symmetric in $L_2(\mathbb T,m)$,
which means that the stationary Markov chain it generates (a random walk on $\mathbb T$)
is reversible.  When $\mu$ is given, we denote $P_\mu$ by $P$. Note that $P_\mu$ is also
a contraction of $C(\mathbb T)$, the space of continuous functions on $\mathbb T$.
\smallskip

Let $\mu$ be {\it adapted} -- the closed subgroup generated by its support is $\mathbb T$;
then $P_\mu$ is ergodic.  Rosenblatt \cite[p. 202]{Ro} proved that
\begin{equation} \label{rho-fourier}
\|P_\mu^n-E\|_2 \to 0 \quad \text{if and only if} \quad 
\sup_{0\ne n\in \mathbb Z} |\hat\mu(n)|<1;
\end{equation}
see also  \cite[Corollary 2.2(i)]{DL4}.
It was shown in \cite[Theorem 4.4]{DL4} that if $\hat\mu(n) \to 0$ as $|n| \to \infty$
(i.e. $\mu$ is {\it Rajchman}), then $\|P_\mu^n-E\|_2 \to 0$.

 \begin{prop}
	 Let $\mu$ be an adapted probability on $\mathbb T$. Then the Markov chain generated
	 by $P_\mu$ is $\rho$-mixing if  (and only if) it is $\alpha$-mixing.
 \end{prop}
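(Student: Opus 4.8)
The parenthetical direction, that $\rho$-mixing implies $\alpha$-mixing, holds for every stationary ergodic chain by \cite[Lemma 3, p.200]{Ro}, so only the forward implication uses the convolution structure. The plan is to string together the three characterizations already at our disposal: $\alpha$-mixing is equivalent to $\alpha_n\to 0$ by \eqref{alpha}, $\rho$-mixing is equivalent to $\|P_\mu^n-E\|_2\to 0$ by \eqref{rho}, and for adapted $\mu$ the latter is equivalent to $s:=\sup_{0\ne k\in\mathbb Z}|\hat\mu(k)|<1$ by \eqref{rho-fourier}. Hence it suffices to show that $\alpha_n\to 0$ forces $s<1$.

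The key observation is that the characters $\chi_k(x)=x^k$ are eigenfunctions of $P_\mu$: from $P_\mu f(x)=\int f(xy)\,\mu(dy)$ one computes, in the convention $\hat\mu(k)=\int y^k\,\mu(dy)$, that $P_\mu\chi_k=\hat\mu(k)\chi_k$, and therefore $P_\mu^n\chi_k=\hat\mu(k)^n\chi_k$. For $k\ne 0$ the character satisfies $E\chi_k=0$ and $\|\chi_k\|_\infty=1$, so each $\chi_k$ (or, if one insists on real test functions, its real part) is admissible in the supremum defining $\alpha_n$. Since $|\chi_k|\equiv 1$, I would get $\|P_\mu^n\chi_k\|_1=|\hat\mu(k)|^n$, and hence the lower bound
\begin{equation*}
\alpha_n\ \ge\ \sup_{0\ne k\in\mathbb Z}\|P_\mu^n\chi_k\|_1\ =\ \sup_{0\ne k\in\mathbb Z}|\hat\mu(k)|^n\ =\ s^n,
\end{equation*}
where the last equality holds because $t\mapsto t^n$ is continuous and increasing on $[0,1]$ and every $|\hat\mu(k)|\le\mu(\mathbb T)=1$.

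With this bound the conclusion is immediate: if the chain is $\alpha$-mixing then $s^n\le\alpha_n\to 0$, which is impossible when $s=1$; hence $s<1$, and \eqref{rho-fourier} together with \eqref{rho} delivers $\rho$-mixing. I do not expect a genuine obstacle here—the whole point is to test $\alpha_n$ against characters rather than against general functions, after which the estimate is forced. The only items needing care are purely cosmetic: the bookkeeping of real versus complex test functions in \eqref{alpha} (using $\operatorname{Re}\chi_k=\tfrac12(\chi_k+\chi_{-k})$ merely inserts the harmless constant $\int_0^1|\cos 2\pi t|\,dt=2/\pi$, which leaves the dichotomy $s<1$ versus $s=1$ untouched), and confirming the exactness of $\sup_k|\hat\mu(k)|^n=s^n$, which follows from the supremum $s$ being approached arbitrarily closely inside $[0,1]$.
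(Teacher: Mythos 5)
Your proof is correct and is essentially the paper's own argument: both test the supremum defining $\alpha_n$ against the characters $e_k(z)=z^k$, which are eigenfunctions of $P_\mu$ with eigenvalue $\hat\mu(\mp k)$, to obtain $\alpha_n\ge|\hat\mu(k)|^n$, conclude $\sup_{0\ne k}|\hat\mu(k)|<1$, and then invoke \eqref{rho-fourier} and \eqref{rho}, with the converse cited from Rosenblatt. The only cosmetic difference is that the paper fixes a single $n$ with $\alpha_n<1$ and takes $n$-th roots, whereas you pass to the limit $\alpha_n\to 0$ and rule out $s=1$ by contradiction; your side remarks (real versus complex test functions, exactness of $\sup_k|\hat\mu(k)|^n=s^n$) are harmless and do not change the argument.
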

\begin{proof} Assume $\alpha$-mixing. Fix $n$ such that $\alpha_n$, defined in \eqref{alpha},
is less than 1. Put $e_k(z)=z^k$, $z \in\mathbb T$. Then $P_\mu e_k = \hat\mu(-k)e_k$.
For $0 \ne k\in\mathbb Z$ we then have
$$
1>\alpha_n \ge \|P_\mu^n e_k\|_1= |\hat\mu(-k)|^n \|e_k\|_1= |\hat\mu(-k)|^n.
$$
Hence $\sup_{0\ne k\in\mathbb Z}|\hat\mu(k)| \le \alpha_n^{1/n} <1$, so by 
\eqref{rho-fourier} $\|P_\mu^n-E\|_2 \to 0$, which is equivalent to $\rho$-mixing.

The converse is proved in \cite[p. 200]{Ro}.
\end{proof}

By \cite[Proposition 2.1]{ConL}, the strong convergence $\|P_\mu^nf-Ef\|_2 \to 0$ for every
$f \in L_2(\mathbb T,m)$ is equivalent to $|\hat\mu(n)|<1$ for every $0\ne n \in\mathbb Z$
({\it strict aperiodicity}).  By \cite[Proposition 4.5]{DL4}, if $\mu$ is a discrete strictly
aperiodic probability, then $\sup_{n\ne 0}|\hat\mu(n)|=1$, so $\lim_n \|P_\mu^n-E\|_2 >0$.
When $\mu=\delta_z$ with $z \in \mathbb T$ not a root of unity, then $\mu$ is adapted
with $|\hat\mu(n)|=1$ for every $n$; hence $\lim_n\|P_\mu^n - E\|_p > 0$ for every
$1 \le p \le \infty$ (which is easily checked directly).

When $\mu$ is non-singular, write $\mu=\nu_a+\nu_s$, with $0\ne \nu_a\ll m$.
Then $\mu$ is adapted, and, by the Riemann-Lebesgue lemma,
 $\limsup_{|n| \to \infty} |\hat \mu(n)| \le \nu_s(\mathbb  T)<1$. 
Since the argument in the proof of \cite[Theorem 4.5]{DL4} shows that $|\hat\mu(n)|<1$ 
for every $n\ne 0$, we obtain $\sup_{n \ne 0} |\hat\mu(n)|<1$, and 
$\lim_n\|P_\mu^n-E\|_2= 0$. This is strengthened in Theorem \ref{harris} below.

\begin{lem} \label{strict}
A non-discrete probability on $\mathbb T$ is adapted and strictly aperiodic.
\end{lem}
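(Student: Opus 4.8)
The plan is to prove both assertions by contraposition, showing that a probability $\mu$ failing either conclusion must in fact be discrete. Everything rests on two classical facts: every proper closed subgroup of $\mathbb T$ is finite, of the form $\{z:z^N=1\}$ for some $N\ge 1$; and equality in the triangle inequality $|\int f\,d\mu|\le\int|f|\,d\mu$ for a unimodular integrand $f$ forces $f$ to be $\mu$-a.e. equal to a constant. The unifying observation is that each failure of the lemma localizes $\mu$ onto a finite subset of $\mathbb T$, which contradicts non-discreteness since a probability concentrated on finitely many points is purely atomic.

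First I would treat adaptedness. Let $H$ be the closed subgroup of $\mathbb T$ generated by $\operatorname{supp}(\mu)$. If $H\ne\mathbb T$, then by the classification of closed subgroups of the circle $H$ is finite, so $\operatorname{supp}(\mu)$ lies in a finite set and $\mu$ is discrete. Hence a non-discrete $\mu$ must have $H=\mathbb T$, i.e. it is adapted.

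Next, strict aperiodicity. Suppose $|\hat\mu(n)|=1$ for some $0\ne n\in\mathbb Z$. Writing $\hat\mu(n)=\int z^{-n}\,d\mu(z)$, a number of modulus one, say $e^{i\phi}$, the integrand $z\mapsto z^{-n}$ is unimodular, so the equality case of the triangle inequality forces $z^{-n}=e^{i\phi}$ for $\mu$-a.e.\ $z$. Thus $\mu$ is concentrated on the set $\{z:z^{n}=e^{-i\phi}\}$ of $|n|$ points, and is again discrete. Contraposing, a non-discrete $\mu$ satisfies $|\hat\mu(n)|<1$ for every $n\ne 0$, i.e.\ it is strictly aperiodic.

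I expect no serious obstacle: the only points requiring care are the structure theorem for closed subgroups of $\mathbb T$ and the precise equality condition in the triangle inequality, both standard. If anything, the subtlety is purely bookkeeping, namely confirming that ``finite support'' genuinely rules out a nontrivial continuous part, which is immediate since a continuous measure annihilates every finite set.
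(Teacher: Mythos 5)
Your proof is correct, but it takes a genuinely different route from the paper's. For adaptedness, the paper argues positively: a non-discrete measure has uncountable support $S_\mu$, so $S_\mu$ contains some $z$ that is not a root of unity, and the density of $(z^n)_{n>0}$ in $\mathbb T$ gives adaptedness; you instead argue by contraposition from the structure theorem that every proper closed subgroup of $\mathbb T$ is finite --- the same underlying fact about closed subgroups in a different guise, with the advantage that you avoid having to justify that non-discreteness forces uncountable support (which requires noting that the continuous part of $\mu$ cannot live on a countable set). The real divergence is in the strict aperiodicity part: the paper invokes the group-theoretic characterization from \cite{ConL}, that $\mu$ is strictly aperiodic if and only if the closed group generated by $S_\mu\cdot S_\mu^{-1}$ is $\mathbb T$, and then reuses the same uncountability device; you work directly with the Fourier-coefficient formulation $|\hat\mu(n)|<1$ for all $n\ne 0$ (which is exactly how the paper introduces strict aperiodicity) and use the equality case of the triangle inequality to show that $|\hat\mu(n)|=1$ concentrates $\mu$ on the $|n|$ solutions of $z^{n}=c$ for a unimodular constant $c$, hence forces discreteness. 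Your argument is more self-contained --- it needs no external characterization beyond the definition --- and it yields the sharper conclusion that failure of strict aperiodicity pins $\mu$ to a coset of a finite subgroup; the paper's route is shorter given the cited equivalence and treats both conclusions with one uniform observation (an uncountable subset of $\mathbb T$ must contain a non-root of unity, whose powers are dense). Both proofs are complete and correct.
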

\begin{proof} Let $\mu$ be  non-discrete. Then its support $S_\mu$ is uncountable,
hence contains some $z$ not a root of unity. Since $(z^n)_{n>0}$ is dense in
$\mathbb T$, $\mu$ is adapted.

$\mu$ is strictly aperiodic if and only if the closed group generated by 
$S_\mu\cdot S_\mu^{-1}$ is $\mathbb T$ (e.g. \cite{ConL}). Since $S_\mu$ is uncountable, 
so is $S_\mu\cdot S_\mu^{-1}$; hence, as above, the closed subgroup the latter
generates is $\mathbb T$.
\end{proof}

{\bf Remark.} If $z \in \mathbb T$ is not a root of unity, then $\delta_z$ is adapted, but not 
strictly aperiodic.

\begin{theo} \label{harris}
Let  $\mu$ be a probability on $\mathbb T$ such that $\mu^k$ is non-singular for some 
$k\ge 1$.  Then $\|P_\mu^n-E\|_1 \to 0$ and $\|P_\mu^n-E\|_\infty \to 0$. 
Moreover, $\|P_\mu^n-E\|_p \to 0$ for every $1 \le p \le \infty$.
\end{theo}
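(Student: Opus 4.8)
The plan is to reduce both endpoint norms to a single scalar quantity, the total variation distance $\|\mu^n-m\|_{\mathrm{TV}}$, and then to produce a minorization of some convolution power by Haar measure. Iterating $P_\mu f(x)=\int f(xy)\,\mu(dy)$ gives $P_\mu^n f(x)=\int f(xy)\,\mu^n(dy)$, where $\mu^n$ is the $n$-fold convolution power; since $m$ is translation invariant, $Ef=\int f(xy)\,dm(y)$ for every $x$, so $(P_\mu^n f-Ef)(x)=\int f(xy)\,d(\mu^n-m)(y)$ and hence $\|P_\mu^n-E\|_\infty\le\|\mu^n-m\|_{\mathrm{TV}}$. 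Because $P_\mu^*=P_{\check\mu}$ and $\|\check\mu^n-m\|_{\mathrm{TV}}=\|\mu^n-m\|_{\mathrm{TV}}$, duality gives $\|P_\mu^n-E\|_1=\|P_{\check\mu}^n-E\|_\infty\le\|\mu^n-m\|_{\mathrm{TV}}$ as well. Thus it suffices to prove $\|\mu^n-m\|_{\mathrm{TV}}\to0$; the remaining range $1<p<\infty$ then follows from Rosenblatt's interpolation result quoted in the introduction.

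I would obtain this total variation convergence, with a geometric rate, from a single minorization: I claim there exist $N_0\ge1$ and $\varepsilon>0$ with $\mu^{N_0}\ge\varepsilon m$ as measures. Granting this, write $\mu^{N_0}=\varepsilon m+(1-\varepsilon)\nu_0$ with $\nu_0$ a probability measure; since $m*\lambda=m$ for every probability $\lambda$, expanding the $j$-fold convolution power collapses every term containing a factor $m$, leaving $\mu^{jN_0}-m=(1-\varepsilon)^j(\nu_0^{*j}-m)$, so $\|\mu^{jN_0}-m\|_{\mathrm{TV}}\le2(1-\varepsilon)^j$. For general $n=jN_0+r$ one more use of $\mu^r*m=m$ gives $\|\mu^n-m\|_{\mathrm{TV}}=\|\mu^r*(\mu^{jN_0}-m)\|_{\mathrm{TV}}\le\|\mu^{jN_0}-m\|_{\mathrm{TV}}\to0$. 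The minorization $\mu^{N_0}\ge\varepsilon m$ is exactly a uniform, full-space form of Doeblin's condition, since $P^{(N_0)}(x,A)=\mu^{N_0}(x^{-1}A)\ge\varepsilon m(x^{-1}A)=\varepsilon m(A)$ for every $x$, the invariance of $m$ removing all dependence on the starting point.

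The heart of the proof is establishing the minorization for the non-singular power $\eta:=\mu^k$; a bound $\eta^M\ge\varepsilon m$ is the same as $\mu^{kM}\ge\varepsilon m$, so it is enough to treat a single non-singular $\eta$. Write $\eta\ge g\,dm$ with $g\ge0$, $g\in L_1(m)$, $\int g\,dm>0$. Truncating, $g_0:=\min(g,M_0)$ is bounded with $\int g_0\,dm>0$ for $M_0$ large, so $\eta\ge g_0\,dm$ with $g_0\in L_\infty(m)\subset L_2(m)$. Then $\eta^2\ge(g_0*g_0)\,dm$, and $g_0*g_0$ is continuous (its Fourier coefficients $\hat g_0(n)^2$ are summable) and not identically zero, hence $g_0*g_0\ge\eta_1>0$ on some open arc $J$, giving $\eta^2\ge\eta_1\,1_J\,dm$. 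Convolving the arc indicator fills the circle: for $N$ with $N\,m(J)>2$ the function $1_J^{*N}$ is continuous and strictly positive on all of $\mathbb{T}$, hence bounded below by some $\delta>0$, so $\eta^{2N}\ge\eta_1^N(1_J^{*N})\,dm\ge\eta_1^N\delta\,m$. This is the desired minorization, with $N_0=2kN$.

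The step I expect to be the main obstacle is precisely this last one — turning the mere existence of an absolutely continuous component into a uniform lower bound by Haar measure — since it is where the group structure must be used essentially: truncation upgrades the $L_1$ density to an $L_2$ one so that one convolution produces a continuous density, and the translation invariance of $m$ is what both spreads positivity over the whole circle in the arc-filling argument and renders the resulting bound independent of the starting point. I note that non-singularity of $\mu^k$ already forces $\mu$ to be non-discrete, hence adapted and strictly aperiodic by Lemma~\ref{strict}, so no separate aperiodicity hypothesis is needed; and the passage from convergence along multiples of $N_0$ to all $n$ is the routine estimate above. Everything outside the minorization lemma is bookkeeping with the absorbing identity $m*\lambda=m$ and the contractivity of convolution in total variation.
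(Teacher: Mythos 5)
Your proposal is correct, but it takes a genuinely different route from the paper. The paper argues operator-theoretically: from $0\ne\nu\ll m$ with $\nu\le\mu^{k}$, it approximates $P_\mu^k$ in $L_1$ operator norm (within distance $\|\mu^{k}-\nu\|<1$) by the compact convolution operator $f\mapsto \frac{d\nu}{dm}*f$, so $P_\mu$ is quasi-compact on $L_1$; quasi-compactness gives uniform mean ergodicity and a peripheral $L_1$-spectrum consisting of finitely many unimodular eigenvalues, which are roots of unity by \cite[Proposition 2.1]{CoL}, and ergodicity of $P_\mu^{kd}$ (non-singular powers being adapted) leaves $1$ as the only such eigenvalue, whence $\|P_\mu^n-E\|_1\to0$ by \cite[Proposition 3.3]{DL4}, with $L_\infty$ by duality and $1<p<\infty$ by \cite{Ro}. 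You bypass the spectral machinery entirely: you reduce both endpoint norms to $\|\mu^n-m\|_{\mathrm{TV}}$, build an explicit uniform minorization $\mu^{N_0}\ge\varepsilon m$ (truncate the density into $L_\infty$, convolve once to get a continuous density bounded below on an arc $J$, then fill $\mathbb T$ by convolving $1_J$ with itself), and conclude geometric decay from the absorbing identity $m*\lambda=m$. The steps that carry the weight all check out: monotonicity of convolution of positive measures justifies $\eta^2\ge(g_0*g_0)\,dm$ and $\eta^{2N}\ge\eta_1^N\,1_J^{*N}\,dm$; $g_0*g_0$ is continuous with positive integral, hence bounded below on an arc; and the arc-filling claim is valid, since once the total length of the arcs being convolved exceeds $1$, every translate $x-J$ meets the positivity set in positive measure, strict positivity persists under further convolution, and continuity on the compact circle yields the uniform $\delta$ (your threshold $N\,m(J)>2$ is even a bit stronger than needed). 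What your argument buys: it is elementary and self-contained (no quasi-compactness, no appeal to \cite{DS}, \cite{CoL}, \cite{L}), it is quantitative (a geometric rate in total variation, hence a spectral gap in $L_1$ and $L_\infty$ rather than bare convergence), and it exhibits Doeblin's condition directly in its strong minorization form, which makes Corollary \ref{doeblin} transparent. What the paper's route buys: it stays inside the quasi-compactness/spectral framework that is reused elsewhere (Theorem \ref{no-doeblin}, Proposition \ref{EHT}), and it is the form of argument that transfers to general compact groups as in Brunel--Revuz \cite{BR}, whereas your arc-filling step exploits the specific fact that an open arc generates $\mathbb T$ under finitely many sums, which has no direct analogue in a general compact group without an adaptedness hypothesis.
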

\begin{proof}
Since $\mu^k$ is non-singular, it is adapted. Hence $P_\mu^k$ is ergodic; hence so is $P_\mu$.

Since $\mu^k$ is non-singular, so is $\mu^{*k}$; hence  there exists a positive measure
$0\ne \nu \ll m$ such that $\nu \le \mu^{*k}$. Put $q=d\nu/dm$ and define $Qf=q*f$ for
$f \in L_1(\mathbb T)$. By \cite[Exercise 55, p. 518]{DS} (see \cite{CG} for a proof in
general compact groups), $Q \ne 0$ is a compact operator on $L_1(\mathbb T)$, and
$\|P_\mu^k-Q\|_1=\|\mu^{*k}-\nu\|_1 <1$, i.e. $P_\mu$ is quasi-compact in $L_1(\mathbb T)$.
By \cite[Corollary VIII.8.4]{DS}, $\big\|\frac1n\sum_{j=1}^nP_\mu^j-E\big\|_1 \to 0$.

By \cite[Theorem VIII.8.3]{DS}, $\sigma(P_\mu,L_1)\cap\mathbb T$ is finite and consists
only of eigenvalues (simple poles).
By \cite[Proposition 2.1]{CoL} there exists $d \ge 1$ such that $\lambda^d=1$ whenever
$\lambda \in \sigma(P_\mu,L_1)\cap\mathbb T$. Since $\mu^{kd}$ is also non-singular,
$P_\mu^{kd}$ is ergodic. Let $\lambda \in \sigma(P_\mu,L_1)\cap\mathbb T$, with eigenfunction
$0\ne g \in L_1(\mathbb T)$. Then $P_{\mu}^{kd}g =\lambda^{kd}g =g$, so $g$ is constant,
and $\lambda=1$. Hence $\|P_\mu^n-E\|_1 \to 0$ by \cite[Proposition 3.3]{DL4}.

Applying the above to $\check\mu$, we obtain
$\|P_\mu^n-E\|_\infty =\|P_\mu^{*n}-E\|_1= \|P_{\check\mu}^n-E\|_1 \to 0$.

For $1<p< \infty$, the convergence $\|P_\mu^n-E\|_p \to 0$ now follows from \cite[p. 211]{Ro}.
\end{proof}

{\bf Remarks.} 1. For $\mu$ non-singular, $\|P_\mu^n-E\|_1 \to 0$ follows from 
\cite[Theorem 3]{Bh}.

2. Brunel and Revuz \cite[Proposition VIII.3]{BR} proved, for any compact metrizable group
$G$, that if $\mu$ is an adapted probability with  a non-singular power, then $P_\mu$ is 
quasi-compact on the space $B(G)$ of all bounded Borel measurable functions (with the sup norm). 
This is (formally) stronger than quasi-compactness on $L_\infty(G)$, obtained (by duality) 
in Theorem \ref{harris} for $G =\mathbb T$.

3. Borda \cite[Theorem 1]{Bo} characterized the property that $\mu$ is adapted with some 
power non-singular (in any compact metric group $G$) by a Law of Iterated Logarithm for 
every bounded Borel function on $G$. See \cite{Bo} for more properties and earlier references.
\medskip

We show that in general, convergence of $P_\mu^n$ in $L_1$ operator norm need not hold.

\begin{prop} \label{singular}
Let $\mu$ be an adapted probability on $\mathbb T$.
If all convolution powers of $\mu$ are singular, then $\lim_n \|P_\mu^n-E\|_1 >0$
and $\lim_n\|P_\mu^n-E\|_\infty > 0$.
\end{prop}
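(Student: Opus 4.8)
The plan is to reduce both statements to a single computation of an $L_1$ operator norm. First I would record that, writing $\sigma_n := \check{\mu^{*n}} - m$ for the signed measure on $\mathbb T$, the operator $P_\mu^n - E$ is exactly convolution by $\sigma_n$: indeed $P_\mu^n f = \check{\mu^{*n}} * f$, while $Ef = (\int f\,dm)\mathbf 1 = m * f$ by invariance of Haar measure, so $(P_\mu^n - E)f = \sigma_n * f$. The goal is then to show $\|P_\mu^n - E\|_1 = \|\sigma_n\|_{TV}$ and to evaluate the latter.

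For the key step I would prove that for any finite signed measure $\sigma$ on $\mathbb T$ the operator norm of $f \mapsto \sigma * f$ on $L_1(\mathbb T,m)$ equals $\|\sigma\|_{TV}$. The inequality $\le$ is Young's inequality. For $\ge$, I would test against an approximate identity $\phi_\varepsilon \ge 0$, $\|\phi_\varepsilon\|_1 = 1$, supported near the identity: then $\sigma * \phi_\varepsilon \in L_1(m)$, so $\|\sigma * \phi_\varepsilon\|_{TV} = \|\sigma * \phi_\varepsilon\|_1$, and $\sigma * \phi_\varepsilon \to \sigma$ weak-$*$ as measures, whence lower semicontinuity of the total variation norm under weak-$*$ convergence gives $\liminf_\varepsilon \|\sigma * \phi_\varepsilon\|_1 \ge \|\sigma\|_{TV}$. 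Applying this to $\sigma = \sigma_n$ and using $\|\check\rho\|_{TV} = \|\rho\|_{TV}$ together with $\check m = m$ yields $\|P_\mu^n - E\|_1 = \|\mu^{*n} - m\|_{TV}$. Since $\mu^{*n}$ and $m$ are both probabilities with $\mu^{*n} \perp m$ by hypothesis, their Jordan decomposition has positive and negative parts of mass $1$ each, so $\|\mu^{*n} - m\|_{TV} = 2$ for every $n$; hence $\lim_n \|P_\mu^n - E\|_1 = 2 > 0$.

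For the $L_\infty$ statement I would invoke the duality already used in the proof of Theorem \ref{harris}: since $P_\mu^* = P_{\check\mu}$ and $E^* = E$, one has $\|P_\mu^n - E\|_\infty = \|P_{\check\mu}^n - E\|_1$. The measure $\check\mu$ is again adapted, and all of its convolution powers $\check\mu^{*n} = \check{\mu^{*n}}$ are singular with respect to the reflection-invariant Haar measure $m$; thus the computation of the preceding paragraph applies verbatim to $\check\mu$ and gives $\|P_{\check\mu}^n - E\|_1 = 2$ for all $n$. Therefore $\lim_n \|P_\mu^n - E\|_\infty = 2 > 0$.

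The only substantive point is the lower bound in the identity $\|\sigma * \cdot\|_{L_1 \to L_1} = \|\sigma\|_{TV}$; this is where the singular part of $\mu^{*n}$ must be captured, and the approximate-identity/lower-semicontinuity argument is precisely what does it (equivalently one may cite the isometric embedding of the measure algebra $M(\mathbb T)$ into the bounded operators on $L_1(\mathbb T)$). Everything else---rewriting $P_\mu^n - E$ as a convolution, the value $2$ of the total variation of a difference of mutually singular probabilities, and the passage to $L_\infty$ by duality---is routine. I note that adaptedness of $\mu$ is not actually needed for the norm computation; it only guarantees that $P_\mu$ is ergodic, placing us in the setting of the preceding results.
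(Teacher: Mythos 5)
Your proposal is correct, but it proves the proposition by a genuinely different route than the paper. The paper argues by contradiction: if $\|P_\mu^n-E\|_1 \to 0$, then $\|P_\mu^{*n}-E\|_\infty \to 0$, so by Horowitz's theorem the operator $P_\mu^*$ is Harris recurrent, and a power of a Harris operator dominates a non-zero integral operator; since $P_\mu^* f = \mu * f$, some power $\mu^n$ would then dominate a non-zero absolutely continuous measure, contradicting singularity. Your argument instead computes the norm exactly: using the isometric embedding of $M(\mathbb T)$ into the bounded operators on $L_1(\mathbb T,m)$ (which your approximate-identity and weak-$*$ lower-semicontinuity argument establishes correctly), you get $\|P_\mu^n-E\|_1 = \|\check{\mu^{*n}}-m\|_{TV} = \|\mu^{*n}-m\|_{TV} = 2$, the last equality because mutually singular probabilities have Jordan decomposition of total mass $2$; the $L_\infty$ case follows by the same duality $\|P_\mu^n-E\|_\infty = \|P_{\check\mu}^n-E\|_1$ that the paper uses. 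Your approach buys more: it is elementary (no Harris recurrence theory), it shows the norms are identically equal to the maximal value $2$ rather than merely bounded away from zero, and, as you note, it dispenses with adaptedness for the norm statement. What the paper's approach buys is thematic coherence: routing the proof through Harris recurrence and Doeblin's condition is what makes Corollary \ref{doeblin} (Doeblin holds iff some power is non-singular) drop out immediately, which is the structural point of that section. One small remark: your observation that the sequence $\|P_\mu^n-E\|_1$ is constant also resolves any question about existence of the limit, which in the paper's formulation follows from monotonicity ($\|P^{n+1}-E\|_1 = \|P(P^n-E)\|_1 \le \|P^n-E\|_1$).
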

\begin{proof}
Assume $\lim_n \|P_\mu^n-E\|_1 =0$. Then $\|P_\mu^{*n}-E\|_\infty \to 0$.
By \cite[Theorem 4.1(g)]{Ho}, $P_\mu^*$ is Harris, so some power bounds a non-zero
integral operator (e.g., \cite[Section 6.2]{Re}).  But $P_\mu^*f=\mu*f$, so some power
$\mu^n$ must bound a non-zero absolutely continous measure, contradicting the singularity
of all powers of $\mu$.

Also all convolution powers of $\check\mu$ are singular, so applying the above to
$\check \mu$ yields $\lim_n\|P_\mu^n-E\|_\infty =\lim_n\|P_\mu^{*n}-E\|_1 =
\lim_n\|P_{\check\mu}^n-E\|_1	> 0$.
\end{proof}

\begin{cor} \label{doeblin}
Let $\mu$ be an adapted probability measure on $\mathbb T$.

(i) $P_\mu$ satisfies Doeblin's condition if and only if some power $\mu^k$ is non-singular.

(ii) $P_\mu$ satisfies Doeblin's condition if and only if so does $P_\mu^*$, i.e.
$\|P_\mu^n-E\|_\infty \to 0$ if and only if $\|P_\mu^n-E\|_1 \to 0$.
\end{cor}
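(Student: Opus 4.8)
The plan is to deduce the corollary from the two preceding results together with Rosenblatt's characterizations recalled in the Introduction: Doeblin's condition for $P_\mu$ is exactly $\|P_\mu^n-E\|_\infty\to 0$, and Doeblin's condition for $P_\mu^*$ is exactly $\|P_\mu^n-E\|_1\to 0$ (the latter via $\|P_\mu^{*n}-E\|_\infty=\|P_\mu^n-E\|_1$). The key observation is that both Theorem \ref{harris} and Proposition \ref{singular} yield statements about \emph{all} the norms $\|P_\mu^n-E\|_p$ simultaneously, so the single measure-theoretic condition ``some power $\mu^k$ is non-singular'' governs convergence in every $L_p$.

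For part (i) I would argue by the resulting dichotomy. If some $\mu^k$ is non-singular, Theorem \ref{harris} gives $\|P_\mu^n-E\|_\infty\to 0$, hence Doeblin's condition holds. If, on the other hand, every convolution power of $\mu$ is singular, then---$\mu$ being adapted---Proposition \ref{singular} gives $\lim_n\|P_\mu^n-E\|_\infty>0$, so Doeblin's condition fails. These two implications are contrapositives, and together they are precisely the equivalence asserted in (i).

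For part (ii) I would run the \emph{same} dichotomy against the $L_1$ norm. Theorem \ref{harris} also gives $\|P_\mu^n-E\|_1\to 0$ when a power is non-singular, while Proposition \ref{singular} gives $\lim_n\|P_\mu^n-E\|_1>0$ when all powers are singular. Thus each of $\|P_\mu^n-E\|_\infty\to 0$ and $\|P_\mu^n-E\|_1\to 0$ is equivalent to the one condition ``$\mu$ has a non-singular power,'' hence the two norm convergences are equivalent to each other. Reading the first as Doeblin's condition for $P_\mu$ and the second as Doeblin's condition for $P_\mu^*=P_{\check\mu}$ (using $\|P_\mu^n-E\|_1=\|P_\mu^{*n}-E\|_\infty$) gives the stated equivalence of Doeblin's conditions. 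As a consistency check one may also note the intrinsic symmetry: since $\mathbb T$ is abelian and $m$ is inversion-invariant, $\mu$ has a non-singular power if and only if $\check\mu$ does, which by (i) again equates Doeblin's condition for $P_\mu$ with that for $P_\mu^*$.

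There is essentially no analytic obstacle here, since the difficult content has already been absorbed into Theorem \ref{harris} (quasi-compactness of $P_\mu$ on $L_1$) and Proposition \ref{singular} (the Harris/integral-kernel obstruction). The only point demanding a line of care is keeping straight, in (ii), the duality $\|P_\mu^n-E\|_1=\|P_\mu^{*n}-E\|_\infty$ and the identification $P_\mu^*=P_{\check\mu}$, so that the $L_1$-statement is correctly read as Doeblin's condition for the time-reversed chain rather than for $P_\mu$ itself.
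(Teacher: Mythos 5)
Your proposal is correct and follows exactly the route the paper intends: the corollary is stated without proof precisely because it is the immediate combination of Theorem \ref{harris} (non-singular power $\Rightarrow$ convergence in every $L_p$ norm), Proposition \ref{singular} (all powers singular plus adaptedness $\Rightarrow$ no convergence in $L_1$ or $L_\infty$), and Rosenblatt's identification of Doeblin's condition with $\|P^n-E\|_\infty\to 0$ (and, by duality, of Doeblin's condition for $P^*$ with $\|P^n-E\|_1\to 0$). Your dichotomy argument, including the careful reading of the $L_1$ statement as Doeblin's condition for the reversed chain $P_\mu^*=P_{\check\mu}$, matches the paper's implicit proof.
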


{\bf Remark.} For general compact groups, Galindo et al. \cite[Theorem 4.7]{GJR}
proved that when $\mu$ is adapted, $\|\frac1n\sum_{k=1}^n P_\mu^k-E\|_1 \to 0$ 
if and only if some power $\mu^k$ is non-singular. They noted also that then $P_\mu$
is quasi-compact on $L_1$. The "if" part had been proved differently in \cite{BR}.

\begin{prop} \label{no-rho}
There exists a continuous probability measure $\mu$ on $\mathbb T$, with all its
convolution powers singular, such that ${\lim_n \|P_\mu^n-E\|_p > 0}$ for every
$1\le p\le \infty$.
\end{prop}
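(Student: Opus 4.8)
The plan is to realize $\mu$ as a sparse infinite Bernoulli-type convolution whose Fourier–Stieltjes coefficients resonate to $1$ along a lacunary sequence. Identifying $\mathbb T=\mathbb R/\mathbb Z$, I fix a rapidly increasing sequence of positive integers $q_N$ with $q_N\mid q_{N+1}$ and super-exponential growth, e.g.\ $q_N=2^{2^N}$, and set $\mu=*_{N\ge1}\tfrac12(\delta_1+\delta_{\exp(2\pi i/q_N)})$, the distribution of $\sum_{N\ge1}\epsilon_N/q_N$ with $(\epsilon_N)$ i.i.d.\ uniform on $\{0,1\}$. This is a genuine infinite convolution of discrete measures each of whose maximal atoms has mass $\tfrac12$; since $\prod_N\tfrac12=0$, the convolution is non-atomic (continuous) by the Jessen–Wintner/Lévy criterion. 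In particular $\mu$ is adapted by Lemma \ref{strict}, so $P_\mu$ is ergodic.

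Next I would verify the two defining properties. For the spectral resonance, $\hat\mu(n)=\prod_{N}\tfrac12\bigl(1+e^{-2\pi i n/q_N}\bigr)$; evaluating at $n=q_M$ and using that $q_N\mid q_M$ for $N\le M$ makes each such factor equal $1$, so $\hat\mu(q_M)=\prod_{N>M}\tfrac12(1+e^{-2\pi i q_M/q_N})$, with modulus $\prod_{N>M}\cos(\pi q_M/q_N)$. As $q_M/q_N$ decays super fast for $N>M$, this product tends to $1$, giving $\sup_{0\ne n}|\hat\mu(n)|=1$. For singularity of all powers, $\mu^{*k}$ is carried by $\{\sum_N s_N/q_N:\,s_N\in\{0,\dots,k\}\}$, which at level $N$ is covered by $(k+1)^N$ intervals of length $O(k/q_{N+1})$; the super-exponential growth forces $(k+1)^N/q_{N+1}\to0$ for every fixed $k$, so $\mu^{*k}$ lives on a Lebesgue-null set and is singular.

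Finally I would deduce the norm statement. Since $\mathbb T$ is abelian, $P_\mu$ is normal on $L_2(\mathbb T)$ and diagonal in the characters, with $P_\mu e_k=\hat\mu(-k)e_k$ and $E$ the projection onto $e_0$; hence $P_\mu^n-E$ is normal with eigenvalues $\{\hat\mu(-k)^n:k\ne0\}\cup\{0\}$, so $\|P_\mu^n-E\|_2=\sup_{k\ne0}|\hat\mu(k)|^n=1$ for every $n$, and in particular $\|P_\mu^n-E\|_2\not\to0$. By Rosenblatt's interpolation \cite[p.\ 211]{Ro}, convergence $\|P_\mu^n-E\|_r\to0$ for any $1\le r\le\infty$ would force $\|P_\mu^n-E\|_2\to0$; as this fails, $\lim_n\|P_\mu^n-E\|_p>0$ for every $1\le p\le\infty$. (The endpoints $p=1,\infty$ also follow directly from Proposition \ref{singular}, since $\mu$ is adapted with all powers singular.)

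I expect the main obstacle to be the singularity of \emph{all} convolution powers at once: a merely lacunary or exponential growth rate secures $\mu$ and finitely many powers but breaks down for large $k$, so the growth of $q_N$ must beat $(k+1)^N$ for every $k$, which forces the super-exponential choice. The delicate point is checking that this same choice simultaneously preserves the divisibility $q_N\mid q_{N+1}$ and the fast tail decay required for $|\hat\mu(q_M)|\to1$, so that sparsity (for singularity) and arithmetic structure (for the spectral resonance) coexist.
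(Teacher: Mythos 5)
Your proof is correct, and it reaches the conclusion by the same final mechanism as the paper, but with a genuinely different construction of the measure. The paper obtains $\mu$ abstractly: it cites Eisner--Grivaux \cite[Propositions 3.9 and 3.1]{EG} to produce a continuous probability with $\hat\mu(n_k)\to 1$ along a sequence satisfying $n_k\mid n_{k+1}$, and then disposes of \emph{all} convolution powers at once via the Riemann--Lebesgue lemma: if $\mu^j$ were non-singular, then $\limsup_{|n|\to\infty}|\widehat{\mu^j}(n)|\le\nu_s(\mathbb T)<1$, contradicting $|\widehat{\mu^j}(n_k)|=|\hat\mu(n_k)|^j\to 1$. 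You instead exhibit $\mu$ explicitly as an infinite Bernoulli convolution with scales $q_N=2^{2^N}$ and verify everything by hand: continuity via the L\'evy/Jessen--Wintner purity criterion, the resonance $|\hat\mu(q_M)|\to 1$ from the product formula (divisibility kills the factors with $N\le M$, super-exponential growth controls the tail product), and singularity of each $\mu^{*k}$ by covering its carrying set with $(k+1)^N$ intervals of length $O(k/q_{N+1})$. The closing step --- $\sup_{n\ne 0}|\hat\mu(n)|=1$ gives $\|P_\mu^n-E\|_2=1$ for all $n$, and Rosenblatt's interpolation \cite{Ro} (or Proposition \ref{singular} at the endpoints) rules out convergence in every $L_p$, $1\le p\le\infty$ --- is identical to the paper's, being just the ``only if'' direction of \eqref{rho-fourier}. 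What your route buys is a concrete, self-contained example, essentially of the kind the paper only points to in its remarks (which cite \cite{EI} and lacunary variants); what it costs is the extra verification, and in fact your covering argument is dispensable: your own resonance estimate gives $|\widehat{\mu^{*k}}(q_M)|=|\hat\mu(q_M)|^k\to 1$, so the paper's Riemann--Lebesgue observation already forces every power to be singular. The paper's route buys brevity at the price of leaning entirely on the cited propositions for the existence of the measure.
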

\begin{proof} Let $(n_k)_{k\ge 1}$ be an increasing sequence of natural numbers such that
$n_k$ divides $n_{k+1}$ for $k \ge 1$. Combining \cite[Propositions 3.9 and 3.1]{EG},
we obtain a continuous probability measure $\mu$ on $\mathbb T$ such that
$\hat\mu(n_k) \to 1$. Hence $\sup_{0\ne n \in\mathbb Z} |\hat\mu(n)|=1$, and thus
$\lim_n\|P_\mu^n-E\|_2 > 0$ by \eqref{rho-fourier}.
By \cite{Ro} $\lim_n\|P_\mu^n-E\|_p > 0$ for every $1 \le p \le \infty$.

If $\mu$ is non-singular, then $\limsup |\hat\mu(n)| \le \nu_s(\mathbb T)<1$, where
$\nu_s$ is the singular component, a contradiction.

Since $|\hat\mu^j(n_k)| = |\hat\mu(n_k)|^j \to 1$, also $\mu^j$ is singular.
\end{proof}

{\bf Remarks.} 1. The probability measure $\mu_0:= \sum_{j=1}^\infty 2^{-j}\mu^j$ is
continuous with support $\mathbb T$ (closure of the union of the supports of the powers
$\mu^j$), and $\hat\mu_0(n_k) \to 1$.

2. In the proof of Proposition \ref{no-rho} we can also use  sequences $(n_k)$ with
$n_{k+1}/n_k \to \infty$ \cite[Example 3.4]{EG}.  Other sequences are given by
\cite[Theorem 1]{FT}.

3. For two specific constructions, see \cite{EI}.

4. By \cite[Theorem 4.6]{DL4}, every adapted discrete $\mu$ satisfies 
${\lim_n \|P_\mu^n-E\|_2 > 0}$; hence ${\lim_n \|P_\mu^n-E\|_p > 0}$ for $p\ge 1$.
A simple example, with $\sup_{n\ne 0}|\hat\mu(n)|=1$ easily computed directly, is 
$\mu=\frac12(\delta_z+\delta_{z^{-1}})$, with $z \in \mathbb T$ not a root of unity.
\medskip

We denote by $\sigma(P,L_p)$ the spectrum of $P_\mu$ as an operator on $L_p(\mathbb T,m)$,
$1 \le p \le \infty$.  Since $\{e_n(z):=z^n: n\in\mathbb Z\}$ is an orthonormal basis
of eigenfunctions, $\sigma(P,L_2)= \overline{\{\hat\mu(n): n\in\mathbb Z\}}$.
It follows also that $ \overline{\{\hat\mu(n); n\in\mathbb Z\}} \subset \sigma(P,L_p)$ for
$1 \le p \le \infty$. Sarnak \cite[p. 309]{Sa} deduced from his main result that if
$\overline{\{\hat\mu(n); n\in\mathbb Z\}}$ is countable, in particular if $\mu$ is Rajchman,
then $\sigma(P,L_p)=\sigma(P,L_2)$ for $1<p< \infty$. When $\mu$ is absolutely continuous
(with respect to $m$) then $\sigma(P,L_p)=\sigma(P,L_2)$ for $1 \le p \le\infty$
\cite[Theorem 6]{Ba}.

Since $P_\mu$ is normal, $\|P_\mu^n-E\|_2 \to 0$ is equivalent, by the spectral theorem,
to $1$ isolated in $\sigma(P,L_2)$ and no other unimodular points in the spectrum.
By \cite{Ro},  $\|P_\mu^n-E\|_p \to 0$ for $1<p<\infty$, so $1$ is a pole of the
resolvent of $P_\mu$ in $L_p$, hence isolated in $\sigma(P_\mu,L_p)$, with no other
unimodular points in that spectrum.

Galindo and Jord\'a \cite[Theorem 6.1]{GJ} proved, for $\mu$ adapted on a general
 compact Abelian group and $1 \le p \le \infty$, that
$\|\frac1n\sum_{k=1}^n P_\mu^k -E\|_p \to 0$ if (and only if) $1$ is isolated in 
$\sigma(P_\mu,L_p)$ (see also \cite[Lemma 2.2]{Za}). It follows (see \cite{DL4}) that 
$\|P_\mu^n-E\|_p \to 0$ if (and only if) $1$ is isolated in $\sigma(P_\mu,L_p)$, with no 
other unimodular points in that spectrum. 
\medskip

For completeness we combine the above mentioned consequences of \cite{DL4} and \cite{Sa}.
\begin{theo} \label{rajchman}
Let $\mu$ be a Rajchman probability measure on $\mathbb T$. Then, for $1<p< \infty$,
$\ \|P_\mu^n-E\|_p \to 0$, and
$\sigma(P_\mu,L_p)=\sigma(P_\mu,L_2) = \overline{\{\hat\mu(n); n\in\mathbb Z\}}$.
\end{theo}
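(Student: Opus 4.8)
The theorem is a synthesis of results already quoted in this section, so the plan is simply to verify that the Rajchman hypothesis feeds correctly into each of them. I first observe that a Rajchman measure is continuous---a discrete measure cannot satisfy $\hat\mu(n)\to 0$---so by Lemma \ref{strict} it is adapted, hence $P_\mu$ is ergodic and $E$ is the projection onto the constants, as the statement requires. For the norm convergence I would then invoke \cite[Theorem 4.4]{DL4}, quoted just before \eqref{rho-fourier}, which gives $\|P_\mu^n-E\|_2\to 0$ directly from the Rajchman property. Passing to general $p$ is then immediate from Rosenblatt's interpolation \cite[p. 211]{Ro}: once $\|P_\mu^n-E\|_r\to 0$ for one exponent (here $r=2$), it holds for every $1<p<\infty$. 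This establishes the first assertion.

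For the spectra, I would first record the $L_2$ identity. Since $P_\mu$ is normal on $L_2(\mathbb T,m)$ and the characters $e_n$ form an orthonormal basis of eigenfunctions with $P_\mu e_n=\hat\mu(-n)e_n$, the spectrum of $P_\mu$ in $L_2$ is the closure of its eigenvalue set, i.e.\ $\sigma(P_\mu,L_2)=\overline{\{\hat\mu(n):n\in\mathbb Z\}}$. To obtain $\sigma(P_\mu,L_p)=\sigma(P_\mu,L_2)$ for $1<p<\infty$ I would check the hypothesis of Sarnak's theorem \cite[p. 309]{Sa}: because $\hat\mu(n)\to 0$, for each $\varepsilon>0$ only finitely many coefficients exceed $\varepsilon$ in modulus, so $\{\hat\mu(n)\}$ accumulates only at $0$ and $\overline{\{\hat\mu(n):n\in\mathbb Z\}}=\{\hat\mu(n):n\in\mathbb Z\}\cup\{0\}$ is countable. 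Sarnak's result then yields the desired spectral equality, closing the chain.

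I do not anticipate a genuine obstacle here, since the substantive analytic content lives in \cite{DL4} and \cite{Sa}; the work is essentially bookkeeping---confirming that the Rajchman hypothesis simultaneously supplies the $L_2$ norm convergence feeding \cite{Ro} and the countable spectrum feeding \cite{Sa}. The one point deserving care is the restriction to the open range $1<p<\infty$: Sarnak's identity is asserted only there, and indeed the endpoint statements may fail, since by Corollary \ref{doeblin} the convergences $\|P_\mu^n-E\|_1\to 0$ and $\|P_\mu^n-E\|_\infty\to 0$ are equivalent to some power of $\mu$ being non-singular, which a Rajchman $\mu$ need not satisfy. I would therefore claim the spectral equality and the norm convergence only for $1<p<\infty$.
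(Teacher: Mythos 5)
Your proposal is correct and follows essentially the same route as the paper, which itself states Theorem \ref{rajchman} as a combination of the previously quoted results: \cite[Theorem 4.4]{DL4} for $\|P_\mu^n-E\|_2\to 0$, Rosenblatt \cite[p.~211]{Ro} to pass to all $1<p<\infty$, and Sarnak \cite[p.~309]{Sa} (countable closure of $\{\hat\mu(n)\}$, automatic for Rajchman measures) for the spectral equality. Your added bookkeeping---non-discreteness giving adaptedness via Lemma \ref{strict}, and the explicit check that the coefficient set accumulates only at $0$---is exactly the tacit content of the paper's one-line justification.
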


{\bf Remarks.} 1. If $\mu$ is Rajchman and $\sigma(P_\mu,L_1)=\sigma(P_\mu,L_2)$, then, by
\cite[Theorem 3.6]{Za}, some power $\mu^k$ is not singular, and by Theorem \ref{harris} 
$\|P_\mu^n-E\|_\infty \to 0$, i.e. $P_\mu$ satisfies Doeblin's condition.

2. Recall that a Rajchman probability is continuous, by Wiener's Lemma \cite[Theorem III.9.6]{Zy}.

3. We note that the convergence to zero of $\hat\nu(n)$ when $\nu$ is a Rajchman probability 
can be arbitrarily slow. This follows from \cite[Theorem 5]{BM}, by taking $d\nu=f\,d\mu$ there.

\begin{prop} \label{zafran}
There exists a Rajchman probability measure $\mu$ on $\mathbb T$ such that
	${\|P^n-E\|_\infty \to 0}$, but $\sigma(P_\mu,L_1) \ne \sigma(P_\mu,L_2)$.
\end{prop}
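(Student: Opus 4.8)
The plan is to satisfy the two requirements separately and then reconcile them by an absolutely continuous perturbation. First recall the two reformulations. On $L_2$ we have $\sigma(P_\mu,L_2)=\overline{\{\hat\mu(n):n\in\mathbb Z\}}$, a sequence tending to $0$ together with its limit. On $L_1$, $P_\mu$ acts as convolution by $\check\mu$, and since the $L_1$-multipliers are exactly the measures $M(\mathbb T)$, the $L_1$-inverse of an invertible convolution is again a convolution by a measure; hence $\sigma(P_\mu,L_1)=\sigma_{M(\mathbb T)}(\check\mu)=\sigma_{M(\mathbb T)}(\mu)$, the last equality because $\nu\mapsto\check\nu$ is an automorphism of $M(\mathbb T)$. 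Writing $\Delta$ for the maximal ideal space of $M(\mathbb T)$, Gelfand theory gives $\sigma(P_\mu,L_1)=\hat\mu(\Delta)\supseteq\overline{\hat\mu(\mathbb Z)}$. So the goal splits into: (a) some power $\mu^k$ non-singular --- which by Corollary \ref{doeblin}(i) and Theorem \ref{harris} yields $\|P_\mu^n-E\|_\infty\to0$; and (b) $\hat\mu(\Delta)\supsetneq\overline{\hat\mu(\mathbb Z)}$, i.e. $\mu$ lacks natural spectrum.

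For the spectral defect I would build a singular ingredient. Take a positive Rajchman measure $\nu$ all of whose convolution powers are singular; such measures are the hard input, supplied by Zafran's construction. Since $\nu$ is Rajchman and no power is non-singular, the contrapositive of \cite[Theorem 3.6]{Za} forbids natural spectrum, so there is $z_0\in\sigma_{M(\mathbb T)}(\nu)\setminus\overline{\hat\nu(\mathbb Z)}$, and $z_0=\hat\nu(\varphi_0)$ for some $\varphi_0\in\Delta$ by Gelfand theory. A generalized character that does not annihilate $L_1(\mathbb T)$ must be one of the ordinary characters $n\in\mathbb Z$; as $z_0\notin\hat\nu(\mathbb Z)$, the character $\varphi_0$ kills $L_1(\mathbb T)$. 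Moreover $z_0\ne0$, since $0\in\overline{\hat\nu(\mathbb Z)}$ by the Rajchman property.

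Now glue. Put $\mu=c^{-1}(\nu+t\,g\,dm)$ with $g\ge0$ a fixed non-trivial density, $t>0$ to be chosen, and $c=1+t\|g\|_1$, so $\mu$ is a probability measure, Rajchman (both summands are), and non-singular --- hence $\|P_\mu^n-E\|_\infty\to0$ by Theorem \ref{harris}, giving (a). Because $\varphi_0$ annihilates $L_1(\mathbb T)$ we get $\hat\mu(\varphi_0)=c^{-1}\hat\nu(\varphi_0)=z_0/c$, so $z_0/c\in\hat\mu(\Delta)=\sigma(P_\mu,L_1)$. To secure (b) it remains to keep $z_0/c$ off $\overline{\hat\mu(\mathbb Z)}$: since $z_0/c\ne0$, and $z_0/c=\hat\mu(n)$ is equivalent to $z_0=\hat\nu(n)+t\,\hat g(n)$, which for each $n$ excludes at most one value of $t$ (using $\hat\nu(n)\ne z_0$), a choice of $t>0$ outside this countable set yields $z_0/c\notin\overline{\hat\mu(\mathbb Z)}=\sigma(P_\mu,L_2)$.

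The main obstacle is the very first ingredient: the existence of a positive Rajchman measure with all powers singular (equivalently, by the argument above, a positive Rajchman measure without natural spectrum). This is precisely the delicate, Wiener--Pitt--type analysis where Fourier decay must be balanced against the arithmetic thinness of the support that forces the extra spectral mass on a non-ordinary character; it is the one step I would quote rather than prove. Everything afterwards is soft: the identification of $\sigma(P_\mu,L_1)$ with the measure-algebra spectrum, the standard fact that non-ordinary generalized characters vanish on $L_1(\mathbb T)$ (so the absolutely continuous perturbation preserves the extra point $z_0$), and the elementary genericity that keeps $z_0/c$ from sliding back into $\overline{\hat\mu(\mathbb Z)}$.
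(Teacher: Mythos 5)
Your proposal is correct, and it builds exactly the same measure as the paper (an absolutely continuous perturbation of a Rajchman probability $\nu$ whose convolution powers are all singular, so that Theorem \ref{harris} yields $\|P_\mu^n-E\|_\infty \to 0$), but it proves the spectral inequality by a genuinely different key lemma. The paper argues by contradiction inside Zafran's machinery: if $\sigma(P_\mu,L_1)=\sigma(P_\mu,L_2)$, then \cite[Theorem 3.12]{Za} (heredity of natural spectrum under absolute continuity, within Rajchman measures) transfers the equality to $\nu$, and \cite[Theorem 3.6]{Za} then gives $\nu$ a non-singular power, a contradiction. You use only the contrapositive of \cite[Theorem 3.6]{Za}, applied to $\nu$, to produce a point $z_0\in\sigma_{M(\mathbb T)}(\nu)\setminus\overline{\{\hat\nu(n): n\in\mathbb Z\}}$, and you replace Theorem 3.12 by classical measure-algebra theory: any generalized character $\varphi_0$ with $\hat\nu(\varphi_0)=z_0$ cannot be an ordinary character, hence annihilates the ideal $L_1(\mathbb T)$, so the extra spectral value survives the absolutely continuous perturbation, $\hat\mu(\varphi_0)=z_0/c$. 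Your identification $\sigma(P_\mu,L_1)=\sigma_{M(\mathbb T)}(\mu)=\hat\mu(\Delta)$ is the same fact from \cite{La} that the paper invokes at the start of the proof of Proposition \ref{sato}, and your closing genericity step is sound: avoiding the countably many equations $z_0=\hat\nu(n)+t\hat g(n)$ does keep $z_0/c$ out of the closure $\overline{\{\hat\mu(n)\}}$, because Rajchman-ness makes that closure $\{\hat\mu(n): n\in\mathbb Z\}\cup\{0\}$ and $z_0\neq 0$. What your route buys is independence from Zafran's Theorem 3.12 together with an explicitly exhibited point of $\sigma(P_\mu,L_1)\setminus\sigma(P_\mu,L_2)$, which is more informative than a pure contradiction; what it costs is that the conclusion is only obtained for mixing parameters $t$ outside a countable exceptional set, whereas the paper's argument works for every mixture $a\nu+(1-a)m$, $0<a<1$. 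One bibliographic quibble: the hard input (a Rajchman probability with all powers singular) is cited in the paper from \cite{CoL} (see also \cite{Pa}); Riesz products in \cite{Za} qualify as well, but attributing the construction to Zafran is loose.
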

\begin{proof}
Let $\nu$ be a Rajchman probability measure with all its powers $\nu^k$ singular
(e.g. \cite{CoL}). Fix $0<a<1$ and define $\mu:=a\nu+(1-a)m$. Then $\mu$ is Rajchman.
Since $\mu$ is not singular, $\|P^n-E\|_\infty \to 0$, by Theorem \ref{harris}.

Assume $\sigma(P_\mu,L_1) = \sigma(P_\mu,L_2)$. Since $\mu$ is Rajchman and $\nu\ll \mu$, 
by \cite[Theorem 3.12]{Za} also $\sigma(P_\nu,L_1) = \sigma(P_\nu,L_2)$. But then, by 
\cite[Theorem 3.6]{Za}, $\nu$ has a non-singular convolution power, contradiction. Hence
$\sigma(P_\mu,L_1) \ne \sigma(P_\mu,L_2)$.
\end{proof}

\begin{prop} \label{hyper}
Let $\mu$ be an adapted probability on $\mathbb T$ such that
$P_\mu L_r(\mathbb T) \subset L_s(\mathbb T)$ for some $1 \le r <s $
($\mu$ is $L_r$-improving\footnote{ This  "moment improving" property of Markov
operators is called {\it hyperboundedness}; see \cite{CoL}.}). Then, for $1<p<\infty$,
$\ \|P_\mu^n-E\|_p \to 0$, and
$\sigma(P_\mu,L_p)=\sigma(P_\mu,L_2) = \overline{\{\hat\mu(n); n\in\mathbb Z\}}$.
\end{prop}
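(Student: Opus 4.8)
The plan is to prove the two assertions separately: the norm convergence will come from the $L_2$ spectral gap, and the spectral identity from a resolvent-smoothing argument driven by iterating the improving hypothesis. Throughout I use that $P_\mu$ is a convolution operator, hence a Fourier multiplier diagonal in the characters, with $P_\mu e_n=\hat\mu(-n)e_n$.

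First I would establish $\sup_{0\ne n\in\mathbb Z}|\hat\mu(n)|<1$. Since $\mu$ is hyperbounded it cannot have atoms (convolution by an atomic measure is a nonnegative combination of rotations, which are isometries of every $L_q$ and cannot carry a nonnegative function with a local singularity from $L_r$ into a strictly smaller $L_s$), so $P_\mu$ is a genuinely hyperbounded normal Markov operator, and the spectral-gap theorem for hyperbounded operators in \cite{CoL} yields $\sup_{0\ne n}|\hat\mu(n)|<1$. By \eqref{rho-fourier} this gives $\|P_\mu^n-E\|_2\to0$, and then $\|P_\mu^n-E\|_p\to0$ for every $1<p<\infty$ by \cite[p.~211]{Ro}. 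I expect this to be the step resting entirely on the external input \cite{CoL}: a single character satisfies $\|e_n\|_q=1$ for all $q$, so hyperboundedness imposes no constraint on an individual $|\hat\mu(n)|$, and the gap must come from the interaction of many frequencies rather than from testing $P_\mu$ on characters.

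For the spectra, the inclusion $\overline{\{\hat\mu(n)\}}\subseteq\sigma(P_\mu,L_p)$ and the identity $\sigma(P_\mu,L_2)=\overline{\{\hat\mu(n)\}}$ are already in the excerpt, so it remains to prove $\sigma(P_\mu,L_p)\subseteq\overline{\{\hat\mu(n)\}}$; equivalently, for each $z$ with $\delta:=\mathrm{dist}(z,\overline{\{\hat\mu(n)\}})>0$ I must extend the $L_2$-resolvent $R_2(z)=(zI-P_\mu)^{-1}$, the multiplier with symbol $(z-\hat\mu(-n))^{-1}$ and norm $\|R_2(z)\|_{2\to2}\le1/\delta$, to a bounded operator on $L_p$. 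The key is to iterate improving. The set of pairs $(1/a,1/b)$ with $P_\mu:L_a\to L_b$ bounded is convex by Riesz--Thorin and contains the diagonal and $(1/r,1/s)$; interpolating $(1/r,1/s)$ with $(0,0)$ gives $P_\mu:L_a\to L_{\gamma a}$ for all $a\ge r$, where $\gamma:=s/r>1$, while a few preliminary steps (strict improvement is available from any $L_a$ with $a>1$) raise exponents below $r$ above $r$. Composing, for the given $p$ I obtain $M$ with $P_\mu^M:L_p\to L_2$ bounded when $p<2$, and (by the same argument applied to $\check\mu$, or by duality) $P_\mu^M:L_2\to L_p$ bounded when $p>2$.

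With such an $M$ in hand I would use the multiplier identity
\begin{equation*}
R_2(z)=\sum_{k=0}^{M-1}z^{-(k+1)}P_\mu^k+z^{-M}P_\mu^M R_2(z),
\end{equation*}
both sides carrying the symbol $(z-\hat\mu(-n))^{-1}$. The finite sum is bounded on $L_p$ since each $P_\mu^k$ is an $L_p$-contraction. For the tail I use that $P_\mu^M$ and $R_2(z)$ commute and factor through $L_2$: for $p<2$ one sends $L_p\xrightarrow{P_\mu^M}L_2\xrightarrow{R_2(z)}L_2\hookrightarrow L_p$, and for $p>2$ one sends $L_p\hookrightarrow L_2\xrightarrow{R_2(z)}L_2\xrightarrow{P_\mu^M}L_p$, a composition of bounded maps with norm controlled by $1/\delta$. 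Thus the right-hand side defines a bounded operator $T$ on $L_p$ with multiplier $(z-\hat\mu(-n))^{-1}$; checking on trigonometric polynomials that $(zI-P_\mu)T=T(zI-P_\mu)=I$ shows $z\in\rho(P_\mu,L_p)$, and letting $z$ range over the complement of $\overline{\{\hat\mu(n)\}}$ gives $\sigma(P_\mu,L_p)=\sigma(P_\mu,L_2)=\overline{\{\hat\mu(n)\}}$. The main obstacle is the spectral gap of the second paragraph: once $\sup_{0\ne n}|\hat\mu(n)|<1$ is known everything else is soft, but that inequality is invisible to testing against individual characters and genuinely requires the hyperboundedness machinery of \cite{CoL}; the only secondary point, easily dispatched, is verifying that the improving exponents can be iterated into $L_2$ in finitely many steps, including the bookkeeping for $p<r$.
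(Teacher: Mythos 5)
Your second half (the equality of spectra) is correct and is genuinely different from the paper's proof, which simply quotes \cite[Theorem 4.1]{GHR}: the Riesz--Thorin iteration producing $P_\mu^M:L_p\to L_2$ (and, by duality applied to $\check\mu$, $P_\mu^M:L_2\to L_p$), followed by the factorization $R_2(z)=\sum_{k=0}^{M-1}z^{-(k+1)}P_\mu^k+z^{-M}P_\mu^MR_2(z)$ and verification of the multiplier identity on trigonometric polynomials, is a complete and self-contained proof that $\sigma(P_\mu,L_p)=\sigma(P_\mu,L_2)$ for $1<p<\infty$, and your bookkeeping claim (finitely many improving steps reach $L_2$ from any $L_p$, $p>1$) does check out, since each interpolation step multiplies $1-1/a$ by the fixed factor $(1-1/s)/(1-1/r)>1$. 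One point you should record: the identity divides by $z$, so you need every $z\notin\sigma(P_\mu,L_2)$ to be nonzero; this is automatic because $\mu$ has no atoms, so Wiener's lemma gives $\liminf_{|n|\to\infty}|\hat\mu(n)|=0$, hence $0\in\overline{\{\hat\mu(n)\}}=\sigma(P_\mu,L_2)$.

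The genuine gap is in your first step. You attribute to \cite{CoL} a ``spectral-gap theorem for hyperbounded operators'' that directly yields $\sup_{0\ne n}|\hat\mu(n)|<1$. No theorem of that strength can exist: hyperboundedness plus ergodicity does not exclude unimodular spectrum other than $1$ (the deterministic swap of two equal atoms is ergodic and trivially hyperbounded, with eigenvalue $-1$). What the hyperboundedness machinery actually provides --- and what the paper cites from \cite[Corollaries 2.5 and 3.8]{CoL} --- is uniform mean ergodicity of $P_\mu$ in $L_2$, i.e.\ that $1$ is \emph{isolated} in $\sigma(P_\mu,L_2)=\overline{\{\hat\mu(n)\}}$. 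That is strictly weaker than what you need: one must still rule out unimodular points of the spectrum other than $1$, which can arise either as unimodular Fourier coefficients (eigenvalues) or as unimodular limits $\hat\mu(n_k)\to\lambda$, $|\lambda|=1$, with all $|\hat\mu(n_k)|<1$. Your observation that hyperboundedness forbids atoms is exactly the right ingredient against the first possibility (by Lemma \ref{strict}, a non-discrete measure is strictly aperiodic, so $|\hat\mu(n)|<1$ for $n\ne0$), but you never use it for that purpose, and you supply nothing against the second possibility. The paper's proof consists precisely of this missing chain: \cite{CoL} gives uniform mean ergodicity; \cite[Theorem 4.6]{DL4} forces $\mu$ to be non-discrete; Lemma \ref{strict} gives strict aperiodicity, hence no unimodular eigenvalues except $1$; and \cite[Proposition 3.3]{DL4} converts ``$1$ isolated plus no unimodular eigenvalues except $1$'' into $\|P_\mu^n-E\|_2\to0$, after which \eqref{rho-fourier} and \cite{Ro} run as you say. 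That last implication is not soft for normal operators --- it is where unimodular non-eigenvalue spectrum is excluded, using positive-definiteness of $\hat\mu$ --- and it is exactly the step your write-up black-boxes into a citation that does not deliver it.
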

\begin{proof} Since $\mu$ is adapted, $P_\mu$ is ergodic.
By \cite[Corollaries 2.5 and 3.8]{CoL}, $P_\mu$ is uniformly mean ergodic in
$L_2(\mathbb T,m)$, i.e. $\|\frac1N\sum_{k=1}^N P_\mu^k-E\|_2 \to 0$.
By \cite[Theorem 4.6]{DL4} $\mu$ is not discrete. 
By Lemma \ref{strict}, $\mu$ is strictly aperiodic, hence has no unimodular
eigenvalues except 1. By \cite[Proposition 3.3]{DL4}, $\|P_\mu^n-E\|_2 \to 0$, and by
\cite{Ro} $\|P_\mu^n-E\|_p \to 0$ for every $1<p< \infty$.

The equality of the spectra is proved in \cite[Theorem 4.1]{GHR}.
\end{proof}

{\bf Remark.} There exist Rajchman probabilities which are not $L_p$-improving; see
 \cite{CoL}, Propositions 6.3 (for a probability with all powers singular) and 6.8 (for an
absolutely continuous probability). On the other hand, the exists a singular probability which
is not Rajchman and is $L_p$-improving \cite[Proposition 6.6]{CoL},\cite[Proposition 4.7]{DL4}.
\smallskip

{\bf Example.} {\it Cantor-Lebesgue measures with constant dissection rate}

\noindent
Christ \cite{Ch} proved that Cantor-Lebesgue measures with constant dissection rate
$\theta >2$ are $L_p$-improving for every $1<p<\infty$. The equality of the spectra
 in this case follows also by combining \cite[Corollary 1.4]{SS} and \cite{Sa}.
In the classical case $\theta=3$, the Cantor-Lebesgue measure has singular powers and
is not Rajchman (e.g. \cite{DL4}).
\medskip

We show that Theorem \ref{rajchman} may fail for $p=1$.

\begin{theo}\label{no-doeblin}
There exists a symmetric Rajchman probability measure $\mu$ on the unit circle $\mathbb T$,
with singular  convolution powers, such that $\sigma(P_\mu,L_1) \subset \mathbb R$, 
$\ \|P_\mu^n-E\|_p \to 0$ for every $1<p< \infty$, but $\lim_n \|P_\mu^n -E\|_1 >0$. 
 Moreover, $\sigma(P_\mu,L_p)=\sigma(P_\mu,L_2)$ for $1<p<\infty$, but
$\sigma(P_\mu,L_1)\ne \sigma(P_\mu,L_2)$.
\end{theo}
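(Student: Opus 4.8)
The plan is to isolate the single genuinely hard requirement and to read off everything else from results already established. So I would look for a symmetric probability measure $\mu$ on $\mathbb T$ that is Rajchman, has all convolution powers singular, and — this is the crux — satisfies $\sigma(P_\mu,L_1)\subset\mathbb R$. Granting such a $\mu$, the rest is assembly: a Rajchman probability is continuous (Wiener's Lemma, Remark 2 after Theorem \ref{rajchman}), hence non-discrete, so Lemma \ref{strict} makes $\mu$ adapted and strictly aperiodic; Theorem \ref{rajchman} then gives $\|P_\mu^n-E\|_p\to0$ and $\sigma(P_\mu,L_p)=\sigma(P_\mu,L_2)=\overline{\{\hat\mu(n):n\in\mathbb Z\}}$ for $1<p<\infty$. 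Singularity of all powers feeds Proposition \ref{singular} to yield $\lim_n\|P_\mu^n-E\|_1>0$, and feeds the Zafran dichotomy (the contrapositive of Remark 1 after Theorem \ref{rajchman}, i.e. \cite[Theorem 3.6]{Za}) to give $\sigma(P_\mu,L_1)\ne\sigma(P_\mu,L_2)$. Symmetry makes every $\hat\mu(n)$ real, so $\sigma(P_\mu,L_2)\subset\mathbb R$. Thus the whole statement is reduced to producing one measure with these four properties.

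Next I would translate the realness requirement into the measure algebra. Since $\mu$ is symmetric, $P_\mu f=\mu*f$ is convolution by a self-adjoint element of $M(\mathbb T)$, and it is standard that the operator spectrum of $P_\mu$ on $L_1(\mathbb T)$ coincides with the spectrum $\sigma_{M(\mathbb T)}(\mu)$ of $\mu$ in the measure algebra. The task is therefore to build a self-adjoint Rajchman measure, all of whose powers are singular, whose measure-algebra spectrum is real but strictly larger than the natural spectrum $\overline{\{\hat\mu(n):n\in\mathbb Z\}}$. To control realness I would invoke the Hermitian criterion: for self-adjoint $\mu$ one has $\widehat{\exp(it\mu)}(n)=\exp(it\hat\mu(n))$, of modulus one for every $n$, and $\sigma_{M(\mathbb T)}(\mu)\subset\mathbb R$ follows once one shows the total-variation norms $\|\exp(it\mu)\|_{M(\mathbb T)}$ grow subexponentially in $t$, because then the spectral radius of $\exp(it\mu)$ is subexponential, which forces $\operatorname{Im}\lambda=0$ for every $\lambda\in\sigma_{M(\mathbb T)}(\mu)$.

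For the construction I would start from the known existence (\cite{CoL}) of Rajchman measures all of whose convolution powers are singular, and work inside such a family — concretely a lacunary generalized Riesz product with amplitudes $a_j\to0$ and $\sum a_j^2=\infty$, objects that are automatically symmetric with real $\hat\mu$, for which $a_j\to0$ forces $\hat\mu(n)\to0$ (Rajchman) while $\sum a_j^2=\infty$ drives singularity of $\mu$ and of each power. The remaining freedom in the parameters I would then spend entirely on the new requirement, arranging that the cancellation among the increasingly spread-out powers $\mu^{*n}$ renders $\|\exp(it\mu)\|_{M(\mathbb T)}$ subexponential, which by the previous paragraph places $\sigma_{M(\mathbb T)}(\mu)$ on the real line. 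The failure of natural spectrum, hence $\sigma(P_\mu,L_1)\ne\sigma(P_\mu,L_2)$, is then automatic from \cite[Theorem 3.6]{Za}, so the extra real spectral points appear on their own.

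The hard part will be exactly that last estimate. Symmetry buys real $L_2$-spectrum for free, but in $M(\mathbb T)$ self-adjointness does \emph{not} imply real spectrum — this is the Wiener--Pitt/non-symmetry phenomenon, and a singular self-adjoint measure can perfectly well have complex spectrum. So the subexponential bound on $\|\exp(it\mu)\|_{M(\mathbb T)}$ must be wrung out of the fine structure of the convolution powers, and this is the technical heart of the proof; the trivial bound $\|\exp(it\mu)\|\le e^{|t|}$ only recovers the unit disk, and a finite convolution relation would either reintroduce an absolutely continuous (hence non-singular) part or force $\mu$ to be idempotent, so no shortcut is available. Everything else — Rajchman, singular powers, the $L_p$ convergence and spectral equalities for $1<p<\infty$, the $L_1$ non-convergence, and $\sigma(P_\mu,L_1)\ne\sigma(P_\mu,L_2)$ — is assembled from Theorem \ref{rajchman}, Proposition \ref{singular}, Lemma \ref{strict}, \cite{Ro} and \cite{Za}; the delicate point is producing, against this non-symmetry, a singular measure whose measure-algebra spectrum stays on the real line.
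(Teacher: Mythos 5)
Your assembly of the theorem from the paper's earlier results is correct and matches the paper's own proof almost verbatim: Rajchman implies continuous implies adapted; Theorem \ref{rajchman} (via \cite{DL4} and \cite{Ro}) gives $\|P_\mu^n-E\|_p\to0$ and the equality of spectra for $1<p<\infty$; Proposition \ref{singular} gives $\lim_n\|P_\mu^n-E\|_1>0$; and the contrapositive of \cite[Theorem 3.6]{Za} gives $\sigma(P_\mu,L_1)\ne\sigma(P_\mu,L_2)$ (the paper instead derives the inequality, in a quantitatively stronger form, from Ohrysko--Wojciechowski \cite[Theorem 4.3]{OW}: the $L_1$ spectrum contains an interval, hence is uncountable, while the $L_2$ spectrum of a Rajchman measure is countable; but the paper itself remarks that Zafran suffices). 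You also correctly identify that $\sigma(P_\mu,L_1)=\sigma(\mu,M(\mathbb T))$, which is how the paper reads the realness condition.

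The genuine gap is that you never produce the measure. The entire weight of the theorem rests on the existence of a symmetric Rajchman probability with all powers singular whose \emph{measure-algebra} spectrum is real, and your proposal reduces everything to this point and then stops: you sketch the Hermitian criterion (subexponential growth of $\|\exp(it\mu)\|_{M(\mathbb T)}$ forces $\sigma_{M(\mathbb T)}(\mu)\subset\mathbb R$) and gesture at generalized Riesz products, but you explicitly concede that the required subexponential estimate is ``the technical heart'' and leave it unproven. As you yourself note, the Wiener--Pitt phenomenon means self-adjointness gives no such bound for free, so this is not a routine verification one can wave through; it is precisely the deep part. The paper closes this gap by citation: Parreau \cite[p.~322]{Pa} constructed exactly such a measure (symmetric, Rajchman, singular powers, real spectrum in $M(\mathbb T)$), and the paper's proof is ``Parreau's measure plus assembly.'' Your strategy is in fact close in spirit to how such measures are built, but as written the proposal proves the theorem only conditionally on an existence statement it does not establish and does not reference.
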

\begin{proof}
 Parreau \cite[p. 322]{Pa} constructed a symmetric Rajchman  probability measure
$\mu_0$ on $\mathbb T$ with singular powers, such that $\sigma(P_{\mu_0},L_1)$,
which equals $\sigma(\mu_0,M(\mathbb T))$ (see beginning of the proof of  Proposition
 \ref{sato}), is a subset of $\mathbb R$. By Theorem \ref{rajchman}, $\sigma(P_{\mu_0},L_p)=\sigma(P_{\mu_0},L_2)$ for $1<p<\infty$. Since $\mu_0$ 
is Rajchman, it is continuous, and therefore adapted. By \cite[Theorem 4.4]{DL4}, 
$\|P_{\mu_0}^n-E\|_2 \to 0$, and by \cite{Ro} $\|P_{\mu_0}^n-E\|_p \to 0$ for any 
$1<p< \infty$.   By Proposition \ref{singular}, $\lim_n \|P_{\mu_0}^n-E\|_1 >0$.
 Applying the result to $\check\mu_0=\mu_0$, we obtain
$\lim_{n\to \infty}\|P_{\mu_0}^n-E\|_\infty  >0$.

Ohrysko and Wojciechowski \cite[Theorem 4.3]{OW} proved that $\sigma(P_{\mu_0},L_1)$,
 is the union of an interval, $[-c,1]$ or $[-1,c]$ for some $c \in[0,1]$, with a 
finite set of reals. Hence $\sigma(P_{\mu_0},L_1)$ is uncountable. Since $\mu_0$ is 
Rajchman, $\sigma(P_{\mu_0},L_2)= \{\hat\mu_0(n): n \in \mathbb Z\}\cup\{0\}$,
 which is countable, so $\sigma(P_{\mu_0},L_1)\ne \sigma(P_{\mu_0},L_2)$.
The inequality of the spectra follows also from \cite[Theorem 3.6]{Za}; our proof shows how 
large the difference of the spectra is.

In fact, $\sigma(P_{\mu_0},L_1)$, is the union of an interval, $[-c,1]$  for some $c \in[0,1]$
with a finite set of reals; the other form given in \cite{OW}, with $[-1,c]$, $c<1$, implies 
that 1 is isolated in the spectrum, and by \cite[Theorem 6.1]{GJ} 
$\|\frac1n \sum_{k=1}^n P_{\mu_0}^k -E\|_1 \to 0$. By \cite{L} $P_\mu$ is quasi-compact
 in $L_1(\mathbb T,m)$, and strict aperiodicity yields $\|P_{\mu_0}^n -E\|_1 \to 0$, as in
 the proof of Theorem \ref{harris}, which contradicts Corollary \ref{doeblin}.
\end{proof}

{\bf Remarks.} 1. The classical Cantor-Lebesgue probability is continuous and symmetric, all
its powers are singular, and $\|P^n-E\|_2 \to 0$ \cite[Proposition 4.7]{DL4}. By Proposition
 \ref{singular} $\lim_n\|P^n-E\|_1 >0$, and $\sigma(P,L_p)=\sigma(P,L_2)$ for $1<p<\infty$
 by \cite[Theorem 3.7]{Sa}. As noted in \cite{Sa}, $\sigma(P,L_1) \ne \sigma(P,L_2)$.
 However, this measure is not Rajchman.

2. Since $\mu$ in Theorem \ref{no-doeblin} is symmetric. also $\lim_n \|P_\mu^n-E\|_\infty >0$;
thus the chain is $\rho$-mixing, but $P_\mu$ fails  Doeblin's condition.

3. Also $\mu_0^2$ satisfies the properties of Theorem \ref{no-doeblin}, and since 
$P_{\mu_0} ^2=P_{\mu_0^2}$, the spectral mapping theorem yields $\sigma(P_{\mu_0^2},L_1)=[0,1]$.
\medskip

For  completeness we recall in the following some facts tacitly used in \cite{Sa}.
\begin{prop} \label{sato}
There exists a singular continuous symmetric probability measure $\mu$ on $\mathbb T$, with 
singular powers, such that $\sigma(P_\mu,L_p) \ne \sigma(P_\mu,L_2) $ for every $1 \le p <2$.
\end{prop}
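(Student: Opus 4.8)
The plan is to reduce the statement to producing a single non-real point in each $L_p$-spectrum, and then to exhibit a measure for which Sarnak's estimates supply such points. The first move, which is also the fact alluded to above, is to identify $\sigma(P_\mu,L_1)$ with the Gelfand spectrum $\sigma(\check\mu,M(\mathbb{T}))$ in the measure algebra: the bounded translation-invariant operators on $L_1(\mathbb{T},m)$ are exactly the convolutions $f\mapsto\nu*f$ with $\nu\in M(\mathbb{T})$, this correspondence is an isometric algebra isomorphism onto a closed, inverse-closed subalgebra of $B(L_1)$ (the inverse of a translation-invariant operator is again translation-invariant), and hence the spectrum of $P_\mu$ in $B(L_1)$ coincides with the spectrum of $\check\mu$ in $M(\mathbb{T})$. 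The same commutation argument shows, more generally, that for $1\le p\le\infty$ the spectrum $\sigma(P_\mu,L_p)$ equals the spectrum of $\check\mu$ in the inverse-closed algebra $M_p(\mathbb{T})$ of $L_p$-Fourier multipliers.

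Next I would record the two soft inclusions. Since $\mu$ is symmetric, $P_\mu$ is self-adjoint on $L_2$, so $\sigma(P_\mu,L_2)=\overline{\{\hat\mu(n):n\in\mathbb Z\}}\subset\mathbb{R}$. On the other hand, by duality $\sigma(P_\mu,L_p)=\sigma(P_\mu,L_{p'})$ (as $P_\mu^*=P_{\check\mu}=P_\mu$), so Riesz--Thorin interpolation of the resolvent $(\lambda-P_\mu)^{-1}$ between $L_p$ and $L_{p'}$ gives $\rho(P_\mu,L_p)\subseteq\rho(P_\mu,L_2)$, i.e. $\sigma(P_\mu,L_2)\subseteq\sigma(P_\mu,L_p)$ for every $1\le p\le2$. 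Consequently it suffices to find a symmetric, singular continuous $\mu$, all of whose powers are singular, such that $\sigma(P_\mu,L_p)$ contains a non-real point for each $1\le p<2$; any such point lies outside $\sigma(P_\mu,L_2)\subset\mathbb{R}$ and yields the desired strict inequality.

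For the construction I would take one of the singular symmetric Riesz-type measures analyzed in \cite{Sa}, chosen so that the closure of $\{\hat\mu(n)\}$ is uncountable --- this feature is forced, since by Sarnak's countability theorem a countable $L_2$-spectrum already gives equality $\sigma(P_\mu,L_p)=\sigma(P_\mu,L_2)$ for all $1<p<\infty$. Singularity of $\mu$ and of all its powers, continuity, and symmetry are read off from the Riesz-product structure (lacunarity of the frequencies and the size of the coefficients). The point of such a measure is that, viewed as an $L_p$-multiplier with $p\ne2$, its spectrum in $M_p(\mathbb{T})$ is genuinely two-dimensional: Sarnak's norm estimates are designed to show that $\sigma(P_\mu,L_p)$ contains a disc of positive radius, hence non-real points, for every $p\ne2$. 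Combined with the reduction above, this gives $\sigma(P_\mu,L_p)\ne\sigma(P_\mu,L_2)$ for all $1\le p<2$.

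The main obstacle is precisely this last lower bound on $\sigma(P_\mu,L_p)$. The inclusion $\sigma(P_\mu,L_2)\subseteq\sigma(P_\mu,L_p)$ is automatic, but exhibiting spectrum off the real axis is not: at $p=2$ the operator is normal with real spectrum, so the non-real part of $\sigma(P_\mu,L_p)$ must collapse as $p\uparrow2$, and one must argue that it never disappears for $p<2$. This is where Sarnak's multiplier-norm and growth estimates are needed, and where the singularity of all powers of $\mu$ is essential --- an absolutely continuous part, or $L_p$-improving behaviour, would instead push toward equality of the spectra (compare Theorem \ref{rajchman} and Proposition \ref{hyper}). Verifying that a single measure carries this non-real spectrum uniformly for all $p\in[1,2)$, rather than only at $p=1$ as happens for the Cantor--Lebesgue measure, is the heart of the argument.
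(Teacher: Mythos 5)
Your reduction steps are fine and in fact coincide with the paper's framework: the identification $\sigma(P_\mu,L_p)=\sigma(\check\mu,M_p(\mathbb T))$ (and $M_1\cong M(\mathbb T)$) via Larsen's results, the observation that symmetry forces $\sigma(P_\mu,L_2)=\overline{\{\hat\mu(n)\}}\subset\mathbb R$, and the inclusion $\sigma(P_\mu,L_2)\subseteq\sigma(P_\mu,L_p)$ are all correct, so it would indeed suffice to exhibit points of $\sigma(P_\mu,L_p)$ outside $\overline{\{\hat\mu(n)\}}$ for every $1\le p<2$. The problem is that the one step carrying all the content --- the existence of a symmetric singular measure whose $L_p$-spectrum is strictly larger for \emph{every} $p\in[1,2)$ --- is both missing and mis-attributed. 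You cite Sarnak \cite{Sa} for ``norm estimates'' showing that $\sigma(P_\mu,L_p)$ contains a disc, hence non-real points, for all $p\ne 2$. No such result is in \cite{Sa}: Sarnak's main theorem points in the opposite direction (a countable $L_2$-spectrum forces $\sigma(P,L_p)=\sigma(P,L_2)$ for all $1<p<\infty$), and his example, the Cantor--Lebesgue measure, yields inequality only at $p=1$ --- exactly the limitation you acknowledge. The measure that actually does the job is Sato's \cite{Sato} (Theorem 1 there), which is what the paper invokes; since you explicitly flag this lower bound as ``the heart of the argument'' and leave it unproven, the proposal is a plan rather than a proof.

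A second, smaller but still substantive omission: continuity, singularity, and singularity of all convolution powers cannot simply be ``read off from the Riesz-product structure.'' Sato's measure is not a classical Riesz product but a weak$^*$ limit of products $\prod_{j\le n}\phi_j(r_jt)$ with general non-negative trigonometric polynomials $\phi_j$; the paper must impose an explicit gap condition on the frequencies $(r_j)$ (so that no overlap of frequencies occurs, strengthened to a ratio condition with $q>1$), then invoke Zygmund's Theorem IV.5.20 for continuity, and Theorems III.1.27 and V.7.6 together with $\sum_\ell\gamma_\ell^2=\infty$ (which follows from $\mu$ not being Rajchman) for singularity; the argument is then repeated for the series $1+\sum_\ell\gamma_\ell^n\cos(\ell t)$ of $\mu^n$ to get singular powers. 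That verification is most of the paper's proof, and your proposal contains none of it.
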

\begin{proof}For fixed $1 \le p <\infty$ let $M_p:=M_p(\mathbb T)$ be the algebra of
$L_p(\mathbb T)$ multipliers\footnote{A multiplier is a bounded $T$ on $L_p$ for which
there exists $(m(n))_{n \in \mathbb Z}$ such that $\widehat{Tf}(n)=m(n)\hat f(n)$ for
every $f \in L_p$.}.
Every convolution operator belongs to $M_p$, and $M_p$ is a commutative Banach algebra
with the operator norm. Applying \cite[Corollary 1.1.1]{La} to $L_p$ (which is a Banach
algebra with convolution as product), we obtain $\sigma(P_\mu,M_p)=\sigma(P_\mu,L_p)$.
Note that $M_1(\mathbb T)$ is isometrically isomorphic to $M(\mathbb T)$, the space 
of all finite complex Borel measures on $\mathbb T$ \cite[Theorem 0.1.1]{La}.

Sato \cite[Theorem 1]{Sato} proved the existence of a symmetric probability measure $\mu$ 
on $\mathbb T$ such that for any $1 \le p <2$,
$$
\sigma(P_\mu,L_2)=\overline{\{\hat\mu(n): n\in \mathbb Z\}} \ne \sigma(P_\mu,M_p)=
\sigma(P_\mu,L_p).
$$
By Theorem \ref{rajchman} this $\mu$ is not Rajchman (this is also seen directly in 
\cite{Sato} from the formula for $\hat\mu$).
\smallskip

We now prove the continuity of Sato's $\mu$.
	
Sato \cite[Case 2, p. 336]{Sato} defined (using his notation) the measure $\mu$ by a weak* 
limit of probability measures of the form $d\mu_n(t)=p_n(t)dm$, where 
$p_n(t)=\Pi_{j=1}^n\phi_j(r_j t) $, with $\phi_j$ non-negative trigonometric polynomials, 
satisfying $\hat\phi_j(0)=1$ and $\hat\phi_j(k)=\hat\phi_j(-k) \ge 0$ for $k\ge1$,
and $r_j\uparrow\infty$ is a rapidly increasing sequence of natural numbers.
\smallskip
	
By the assumptions on $\phi_j$, it has the representation 
$\phi_j(t)=1+2\sum_{k=1}^{m_j} \hat\phi_j(k)\cos(kt)$, for some $m_j$. By the definition 
of $\mu_n$, for each $n$,
$$
\hat\mu(r_1 k_1+\cdots+ r_n k_n)= \hat\mu_n(r_1 k_1+\cdots+ r_n k_n)=
\Pi_{j=1}^n\hat\phi_j(k_j) \quad \text{for} \  |k_j|\le m_j, \quad
1\le j\le n
$$ 
(and $\hat\mu(\ell)=0$ for other $\ell$).
We may choose the rapidly increasing $(r_j)$ (as implicitly done in Sato) in such a way 
that there will be a positive gap (in order to avoid overlap) between the maximal 
frequency\footnote{The frequencies of a Fourier series $\sum_k a_k \e^{ikt}$ are those 
$k$ for which $a_k \ne 0$.} of $p_n$, which is $r_1m_1+\cdots + r_n m_n$, and the minimal 
frequencies added to $p_{n+1}$ when multipying $p_n$ by $\phi_{n+1}$; that is, we assume 
	\begin{equation} \label{gap}
r_1m_1+\cdots + r_n m_n <{r_{n+1}-(r_1 m_1+\cdots+r_n m_n}) 
	\end{equation}
($m_j$ is defined by $\phi_j$ only). 
The classical Riesz product, as handled in \cite[ Vol I, page 208]{Zy},
	is the special case where all $m_j=1$ and $(r_j)$ is a lacunary sequence.

Put $M_n= r_1m_1+\cdots + r_n m_n$; then the partial products $p_n(t)$ can be 
represented as partial Fourier series $1+\sum_{\ell=1}^{M_n} \gamma_\ell \cos(\ell t)$, 
where $\gamma_\ell=\hat\mu(\ell)$.
By the construction we may define formally, and in a unique way, the infinite Fourier series 
$1+\sum_{\ell=1}^\infty \gamma_\ell \cos(\ell t)$, which  represents the infinite product 
 $\Pi_{j=1}^\infty\phi_j(r_jt)$.

We conclude that along $M_n$, the partial sums of the formal series are non-negative. 
We then apply Zygmund's \cite[Theorem IV.5.20, vol. I, p. 148] {Zy} to conclude that the 
limiting measure $\mu$ of $\mu_n$ is continuous. 
\medskip

To show that $\mu$ is singular we proceed as in the proof of Zygmund's 
\cite[Theorem V.7.6, Vol.I,  p. 209] {Zy}.
We  use the the inequality $1+u\le {\rm e}^u$, to conclude that 
$$
0\le p_n(t) \le \exp\big(\sum_{\ell=1}^{M_n}\gamma_\ell \cos(\ell t)\big).
$$
In choosing $(r_n)$, we assume (a strengthenning of \eqref{gap}) that for some $q >1$, 
$$
\frac{{r_{n+1}-(r_1 m_1+\cdots+r_n m_n})}{ r_1m_1+\cdots + r_n m_n}\ge q, \qquad \forall n.
$$
 We necessarily have $\sum_{\ell=1}^\infty \gamma_\ell^2=\infty$, since $\mu$ is not Rajchman.
By the infinitely many gaps property we may apply \cite[Theorem III.1.27, Vol. I, p 79]{Zy} 
and finish as in \cite[Theorem V.7.6,  p. 209] {Zy}. Hence $\mu$ is singular.
\medskip

 All powers $\mu^n$ are obviously continuous, but  not Rajchman since $\mu$ is not. 
The Fourier-Stieltjes series of $\mu^n$ is $1+\sum_{\ell=1}^\infty \gamma^n_\ell \cos(\ell t)$,
and we may repeat the process on $(\phi_j^n)$ to conclude that all powers 
$\mu^n$ are singular.
\end{proof}

{\bf Remarks.} 1. In \cite[Corollary 2.7]{KS}, Kanjin and Sato proved that $\mu$ of Proposition 
\ref{sato} satisfies also 
$\sigma(P_\mu,L_1)=\{z\in\mathbb C: |z| \le 1\} \supsetneqq \sigma(P_\mu,L_p)$, $1<p<2$.

2. In contrast to the inequality of the spectra in Proposition \ref{sato}, the peripheral 
spectra $\sigma(P_\mu,L_p)\cap\mathbb T$ are all the same for $1<p<\infty$, by the general 
result in \cite[Theorem 4.3]{CL26}.  This is not true for $p=1$; since $\mu$ is symmetric, 
$\sigma(P_\mu,L_2)\cap\mathbb T\subset\{-1,1\}$, while
$\sigma(P_\mu,L_1)\cap\mathbb T=\mathbb T$ by \cite[Corollary 2.7]{KS}.

 \begin{theo} \label{ac-power}
Let $\mu$ be a probability on $\mathbb T$ such that $\mu^k$ is absolutely continuous for
some $k\ge 1$.  Then $\mu$ is Rajchman, $\|P_\mu^n-E\|_p \to 0$ for $1\le p \le \infty$, and
 \begin{equation} \label{spectra}
\sigma(P_\mu,L_p)=\sigma(P_\mu,L_2) \quad \text{for } 1 \le p \le \infty.
 \end{equation}
\end{theo}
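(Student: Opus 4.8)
The plan is to settle the Rajchman property and the norm convergence at once, dispose of $1<p<\infty$ by results already proved, and reserve the real work for $p=1$, with $p=\infty$ following by duality. First I would observe that since $\mu^k$ is absolutely continuous, $\widehat{\mu^k}(n)=\hat\mu(n)^k\to 0$ by the Riemann--Lebesgue lemma, whence $|\hat\mu(n)|\to 0$ and $\mu$ is Rajchman. As $\mu^k$ is then non-singular, Theorem \ref{harris} gives $\|P_\mu^n-E\|_p\to 0$ for every $1\le p\le\infty$, and Theorem \ref{rajchman} gives $\sigma(P_\mu,L_p)=\sigma(P_\mu,L_2)=\overline{\{\hat\mu(n):n\in\mathbb Z\}}$ for $1<p<\infty$. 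Thus only \eqref{spectra} for $p=1$ and $p=\infty$ remains; since $\sigma(P_\mu,L_2)=\overline{\{\hat\mu(n)\}}\subseteq\sigma(P_\mu,L_p)$ always holds, I need only the reverse inclusion.

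For $p=1$ I would compute in the measure algebra. As recalled in the proof of Proposition \ref{sato}, $\sigma(P_\mu,L_1)=\sigma(P_\mu,M_1)=\sigma_{M(\mathbb T)}(\check\mu)$, the spectrum of $\check\mu$ in $M(\mathbb T)$, which by Gelfand theory is $\{\gamma(\check\mu):\gamma\in\Delta\}$ for $\Delta$ the maximal ideal space of $M(\mathbb T)$. The structural fact driving the proof is that $L_1(\mathbb T)$ is a closed ideal of $M(\mathbb T)$ whose own maximal ideal space is $\mathbb Z$; consequently, for each $\gamma\in\Delta$ the restriction $\gamma|_{L_1}$ is either identically $0$ or is the evaluation $\gamma_n(g\,dm)=\hat g(n)$ for a single $n\in\mathbb Z$.

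I would then split accordingly. If $\gamma|_{L_1}=\gamma_n$, then for every $f\in L_1$ the measure $\check\mu*(f\,dm)$ lies in the ideal $L_1$, so multiplicativity yields $\gamma(\check\mu)\,\hat f(n)=\gamma\big(\check\mu*(f\,dm)\big)=\hat{\check\mu}(n)\hat f(n)$; taking $f=e_n$ forces $\gamma(\check\mu)=\hat{\check\mu}(n)=\hat\mu(-n)\in\{\hat\mu(m):m\in\mathbb Z\}$. If instead $\gamma|_{L_1}=0$, then $(\check\mu)^k=\check{(\mu^k)}$ is absolutely continuous, hence in $L_1$, so $\gamma(\check\mu)^k=\gamma\big((\check\mu)^k\big)=0$ and $\gamma(\check\mu)=0$. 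Thus $\sigma(P_\mu,L_1)\subseteq\{\hat\mu(m)\}\cup\{0\}=\overline{\{\hat\mu(m)\}}=\sigma(P_\mu,L_2)$, giving equality. For $p=\infty$ the Banach adjoint of $P_{\check\mu}$ on $L_1$ is $P_\mu$ on $L_\infty$ (the duality already used for Theorem \ref{harris}), so $\sigma(P_\mu,L_\infty)=\sigma(P_{\check\mu},L_1)=\sigma_{M(\mathbb T)}(\mu)$, and the identical computation with $\mu$ in place of $\check\mu$ (using that $\mu^k$ is absolutely continuous) gives containment in $\overline{\{\hat\mu(m)\}}=\sigma(P_\mu,L_2)$.

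The main obstacle is precisely this reverse inclusion for $p=1$. A naive spectral mapping argument, using $\sigma(P_\mu^k,L_1)=\sigma(P_{\mu^k},L_1)=\sigma(P_{\mu^k},L_2)$ from Bachelis \cite{Ba}, only determines $\sigma(P_\mu,L_1)$ up to multiplication by $k$-th roots of unity and cannot by itself exclude rotated exotic spectral values. The Gelfand computation is what removes this ambiguity: the ideal structure pins $\gamma(\check\mu)$ down to an honest Fourier coefficient on the homomorphisms that are nonzero on $L_1$, while absolute continuity of the power annihilates $\gamma(\check\mu)$ on the remaining, exotic homomorphisms.
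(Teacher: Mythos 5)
Your proof is correct, but for the decisive cases $p=1,\infty$ it follows a genuinely different route from the paper. The paper treats every $p\ne 2$ by one uniform argument: for each $q\ge k$ the power $\mu^q$ is absolutely continuous, so Barnes gives $\sigma(P_\mu^q,L_p)=\sigma(P_\mu^q,L_2)$; the spectral mapping theorem then shows that any $\lambda\in\sigma(P_\mu,L_p)$, $\lambda\ne 0$, satisfies $\lambda^q=\hat\mu(n_q)^q$ with $|\hat\mu(n_q)|=|\lambda|$, and since the Rajchman property makes the level set $\{n:|\hat\mu(n)|=|\lambda|\}$ finite, a pigeonhole over infinitely many primes $q$ plus an elementary roots-of-unity argument ($q=sd+r$ forces $z^r=1$) pins down $\lambda=\hat\mu(j_\ell)$ exactly. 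Your argument instead quotes Theorem \ref{rajchman} for $1<p<\infty$ and resolves the endpoints inside the Gelfand theory of $M(\mathbb T)$: since $L_1$ is a closed ideal whose maximal ideal space is $\mathbb Z$, every complex homomorphism $\gamma$ either restricts to evaluation $\gamma_n$ on $L_1$ (and then multiplicativity applied to $\check\mu*(f\,dm)$ with $\hat f(n)\ne0$ forces $\gamma(\check\mu)=\hat\mu(-n)$), or kills $L_1$, in which case $\gamma(\check\mu)^k=\gamma(\check\mu^{\,k})=0$ because the $k$-th power lies in the ideal. This is exactly the point your last paragraph identifies correctly: naive spectral mapping alone leaves a root-of-unity ambiguity, and each proof removes it by a different mechanism --- the paper by arithmetic over many primes, you by the nilpotency of $\gamma(\check\mu)$ at exotic homomorphisms. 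What each buys: the paper's proof is self-contained modulo Barnes's theorem and never invokes the structure of the maximal ideal space of $M(\mathbb T)$; yours is more structural, in effect proving the stronger statement that the full spectrum of $\check\mu$ in the measure algebra is $\{\hat\mu(n)\}\cup\{0\}$ (i.e., $\mu$ has natural spectrum, in Zafran's sense), with the Rajchman property needed only to identify that set with $\sigma(P_\mu,L_2)$. One small point of care, which you handled consistently with the paper: the identification $\sigma(P_\mu,L_1)=\sigma_{M(\mathbb T)}(\check\mu)$ rests on Larsen's Corollary 1.1.1 and $M_1(\mathbb T)\cong M(\mathbb T)$, as recalled in the proof of Proposition \ref{sato}.
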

\begin{proof} The convergence $\|P_\mu^n-E\|_p \to 0$ for $1\le p\le \infty$ follows from
Theorem \ref{harris}.

When $k=1$,  the equality \eqref{spectra} is by \cite{Ba}, and $\mu$ is Rajchman by the
Riemann-Lebesgue lemma.

Assume $k \ge 2$. Since $(\hat\mu(n))^k= \widehat{\mu^k}(n) \to 0$, also $\mu$ is Rajchman.
Hence
\begin{equation} \label{L2-spectrum}
\sigma(P_\mu,L_2)=\overline{\{\hat\mu(n): n\in\mathbb Z\} }=
\{\hat\mu(n): n\in\mathbb Z\} \cup \{0\}.
\end{equation}
Since $\mu$ is Rajchman, for any $a>0$ there are only finitely many $n$ with $|\hat\mu(n)|=a$.

For convenience we write $P$ for $P_\mu$. Fix $2 \ne p \in [1,\infty]$. As noted earlier,
$\sigma(P,L_2)=\overline{\{\hat\mu(n): n\in\mathbb Z\} } \subset \sigma(P,L_p)$.
Let  $\lambda \in \sigma(P,L_p)$. If $\lambda=0$ then $\lambda \in \sigma(P,L_2)$,
so we assume $\lambda \ne 0$. Fix $q\ge k$.  Then also $\mu^q$ is absolutely continuous,
so by \cite{Ba} $\sigma(P^q,L_p)=\sigma(P^q,L_2)$ (for any $1 \le p \le \infty$).
By the spectral mapping theorem, $\lambda^q \in \sigma(P^q,L_p)=\sigma(P^q,L_2)$.
Since $\lambda \ne 0$, by \eqref{L2-spectrum} there exists
$\hat\mu(n_q)=\lambda_q \in \sigma(P,L_2)$ such that $\lambda_q^q=\lambda^q$, so
$|\hat\mu(n_q)|=|\lambda_q|=|\lambda|$.
	
But there are only finitely many $j_1,\dots,j_J$ such that $|\hat\mu(j_\ell)|=|\lambda|$.
Hence there is $\ell$ such that $(\hat\mu(j_\ell)/\lambda)^q=1$ for infinitely many prime $q$.
Put $z=\hat\mu(j_\ell)/\lambda$, and let $d$ be the minimal natural number with $z^d=1$,
	Assume $z \ne 1$ ($d>1$). Fix a prime $q>\max\{d,k\}$ with $z^q=1$.
Writing $q=sd+r, \ 1\le r <d$ we obtain $1=z^q=z^{sd+r}=z^r$, contradicting the minimality
of $d$. Hence $\lambda=\hat\mu(j_\ell) \in \sigma(P,L_2)$, which proves \eqref{spectra}.
\end{proof}

{\bf Remarks.} 1.  There exist singular probabilities $\mu$ such that $\mu*\mu$ is
absolutely continuous; see \cite{HZ} and \cite{eA} and references there.

2. Zafran \cite[Theorem 3.9]{Za} proved that when $\mu$ is a Riesz product defined by
${\prod_{k=1}^\infty (1+a_k\cos n_k x)}$, where $n_{k+1} \ge qn_k$ for some $q>3$ and
$0 \ne a_k \to 0$ (so $\mu$ is Rajchman), then $\sigma(P,L_1)=\sigma(P,L_2)$ if and only
if some power $\mu^n$ is absolutely continuous (i.e. $\sum_{k=1}^\infty |a_k|^n < \infty$).

3. The equality $\sigma(P_\mu,L_1)=\sigma(P_\mu,L_2)$ is equivalent to \eqref{spectra}, by
\cite[Proposition 3]{Ba}.

\begin{theo} \label{sato2}
There exists a symetric non-singular continuous probability measure $\mu$ on $\mathbb T$
 such that $\sigma(P_\mu,L_p) \ne \sigma(P_\mu,L_2)$ for every $1\le p <2$.
\end{theo}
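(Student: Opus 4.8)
The plan is to turn the singular example of Proposition~\ref{sato} into a non-singular one by averaging it with Lebesgue measure, exploiting the fact that convolution by $m$ is precisely the mean projection $E$. Mixing with $m$ affects only the constant eigendirection, so the part of the spectrum responsible for Sato's inequality (which lives on the mean-zero subspace) should survive, up to a harmless scaling, and the inequality should transfer.

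Concretely, let $\nu$ be the symmetric singular continuous probability of Proposition~\ref{sato}, so that $\nu$ has singular powers and $\sigma(P_\nu,L_p)\neq\sigma(P_\nu,L_2)$ for every $1\le p<2$. Fix $0<a<1$ and put $\mu:=a\nu+(1-a)m$. Then $\mu$ is symmetric (both $\nu$ and $m$ are), continuous (neither has atoms), and non-singular (its absolutely continuous part $(1-a)m$ is nonzero); it remains to establish the spectral inequality.

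Since $m$ is Haar measure, $P_m f=m*f=Ef$, whence the operator identity $P_\mu=aP_\nu+(1-a)E$. Both $P_\nu$ and $E$ preserve the topological direct sum $L_p=\mathbb{C}1\oplus L_p^0$, where $L_p^0:=\{f\in L_p: Ef=0\}$ and the projection $E$ is bounded for $1\le p<\infty$. On $\mathbb{C}1$ the operator $P_\mu$ is the identity, while on $L_p^0$ we have $E=0$, so $P_\mu|_{L_p^0}=a\,P_\nu|_{L_p^0}$. Writing $\Sigma_p:=\sigma(P_\nu|_{L_p^0})$ and using that the spectrum of an operator respecting such a decomposition is the union of the spectra of its restrictions, I obtain $\sigma(P_\mu,L_p)=\{1\}\cup a\Sigma_p$ and, applied to $P_\nu$ itself, $\sigma(P_\nu,L_p)=\{1\}\cup\Sigma_p$ (and likewise for $p=2$).

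It remains to compare the two sides. Because $P_\nu$ is an $L_p$-contraction, $\Sigma_p$ and $\Sigma_2$ lie in the closed unit disc, so $1/a>1$ lies outside both; hence $1\notin a\Sigma_p$ and $1\notin a\Sigma_2$, and one checks that $\{1\}\cup a\Sigma_p=\{1\}\cup a\Sigma_2$ forces $a\Sigma_p=a\Sigma_2$, i.e. $\Sigma_p=\Sigma_2$. But Proposition~\ref{sato} gives $\{1\}\cup\Sigma_p=\sigma(P_\nu,L_p)\neq\sigma(P_\nu,L_2)=\{1\}\cup\Sigma_2$, which already forces $\Sigma_p\neq\Sigma_2$. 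Combining the two, $\sigma(P_\mu,L_p)\neq\sigma(P_\mu,L_2)$ for every $1\le p<2$, as required. The main obstacle is exactly this bookkeeping around the eigenvalue $1$: a priori $1$ might also belong to $\Sigma_p$ (not merely to the constant part), so one cannot blindly cancel the common factor $\{1\}$ after scaling by $a$; the contraction property, placing $1/a$ outside the disc, is what legitimizes the cancellation. As a bonus, since $\mu$ is non-singular, Theorem~\ref{harris} yields $\|P_\mu^n-E\|_p\to 0$ for all $1\le p\le\infty$, so the spectral discrepancy persists despite uniform ergodicity in every $L_p$.
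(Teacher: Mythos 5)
Your proposal is correct and takes essentially the same route as the paper: the identical mixture $\mu=a\nu+(1-a)m$ with Sato's measure, the identical decomposition $L_p=\mathbb{C}1\oplus L_p^0$ yielding $P_\mu|_{L_p^0}=a\,P_\nu|_{L_p^0}$ and hence $\sigma(P_\mu|_{L_p^0})=a\,\sigma(P_\nu|_{L_p^0})$. The only cosmetic difference is in the last step: the paper exhibits an explicit witness $a\lambda$ with $\lambda\in\sigma(P_\nu,L_p)\setminus\sigma(P_\nu,L_2)$, whereas you cancel the point $1$ set-theoretically; both cancellations are legitimized by the same observation that $a\,\sigma(P_\nu|_{L_p^0})$ lies in the disc of radius $a<1$.
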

\begin{proof} Let $\nu$ be the symmetric continuous probability constructed by Sato such
that $\sigma(P_\nu,L_p) \ne \sigma(P_\nu,L_2)$ for $1\le p <2$ (Proposition \ref{sato}).
We proved in Proposition \ref{sato} that all powers of $\nu$ are singular.

 Fix $0< a < 1$ and put $\mu:=a\nu+(1-a)m$, so $\mu$ is symmetric, continuous and 
non-singular. By Theorem \ref{harris} $\|P_\mu^n-E\|_p \to 0$ for $1\le p \le \infty$.
Hence 1 is isolated in $\sigma(P_\mu,L_p)$, and $I-P_\mu$ is invertible on
$L_p^0:=\{f\in L_p: \int f\,dm=0\}$. Put $Q_{\mu,p}:=P_\mu{|L_p^0}$ and
$Q_{\nu,p}:=P_\nu{|L_p^0}$. Then $\sigma(P_\mu,L_p)=\sigma(Q_{\mu,p})\cup\{1\}$
with $r(Q_{\mu,p})<1$, and $\sigma(Q_{\mu,2})=\overline{\{\hat\mu(n): n\ne 0\}}$.

For $f \in L_p^0$ we have $m*f=0$, so
$$
\lambda I-Q_{\mu,p}=\lambda I-aQ_{\nu,p}=a\big(\frac\lambda{a}I-Q_{\nu,p}\big ),
$$
which yields $\sigma(Q_{\mu,p})=a\sigma(Q_{\nu,p})$.

Fix $1\le p<2$, and let $\lambda \in \sigma(P_\nu,L_p)\backslash \sigma(P_\nu,L_2)$.
Since $\hat\nu(0)=1$, $\lambda \ne 1$, and
$ \lambda \notin\overline{\{\hat\nu(n): n\ne 0\}}=\sigma(Q_{\nu,2})$. Since
$1 \ne \lambda \in \sigma(P_\nu,L_p)=\sigma(Q_{\nu,p})\cup\{1\}$, we heve
$\lambda \in \sigma(Q_{\nu,p})$. Hence $a\lambda \in \sigma(Q_{\mu,p})\subset \sigma(P_\mu,L_p)$. But
$$
a \lambda \notin a\overline{\{\hat\nu(n): n\ne 0\}}=
\overline{\{\hat\mu(n): n\ne 0\}}=\sigma(Q_{\mu,2}).
$$
Since  $a\lambda \ne 1$, $a\lambda \notin \sigma(P_\mu,L_2)$.
\end{proof}

{\bf Remarks.} 1. Theorem \ref{sato2} shows that Theorem \ref{ac-power} cannot be improved.

2. By Theorem \ref{rajchman}, $\mu$ of Theorem \ref{sato2} is not Rajchman.
On the other hand, in Proposition \ref{zafran}  $\mu$ is Rajchman, so for $1<p< \infty$
the spectra of $P_\mu$ in $L_p$ are the same.

\begin{prop} \label{no-rho-L1}
There exists a continuous probability measure $\mu$ on $\mathbb T$, with all its
convolution powers singular, such that ${\lim_n \|P_\mu^n-E\|_p > 0}$ for every
$1\le p\le \infty$, while  $\sigma(P_\mu,L_1)=\sigma(P_\mu,L_2)$.
\end{prop}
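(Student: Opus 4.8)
My plan is to reduce all three requirements to a single construction: a continuous probability measure $\mu$ on $\mathbb T$ whose Fourier coefficients $\{\hat\mu(n):n\in\mathbb Z\}$ are dense in the closed unit disk $\overline D=\{\lambda\in\mathbb C:|\lambda|\le 1\}$. Granting such a $\mu$, everything follows formally. Indeed $\sigma(P_\mu,L_2)=\overline{\{\hat\mu(n):n\in\mathbb Z\}}=\overline D$; since $P_\mu$ is an $L_1$-contraction, $\sigma(P_\mu,L_1)\subseteq\overline D$, while the general inclusion $\overline{\{\hat\mu(n):n\in\mathbb Z\}}\subseteq\sigma(P_\mu,L_1)$ forces the reverse containment, so $\sigma(P_\mu,L_1)=\overline D=\sigma(P_\mu,L_2)$. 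Density in $\overline D$ gives $\sup_{0\ne n}|\hat\mu(n)|=1$, hence $\|P_\mu^n-E\|_2\not\to 0$ by \eqref{rho-fourier}, and then $\lim_n\|P_\mu^n-E\|_p>0$ for all $1\le p\le\infty$ by \cite{Ro}, exactly as in Proposition \ref{no-rho}. Finally, density produces $n_k$ with $\hat\mu(n_k)\to 1$; since $\widehat{\mu^j}(n)=\hat\mu(n)^j$ we get $\limsup_{|n|\to\infty}|\widehat{\mu^j}(n)|=1$, so by the Riemann--Lebesgue lemma $\mu^j$ can carry no absolutely continuous part (such a part would bound the $\limsup$ by the mass of the singular part, which is $<1$). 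As $\mu$ is continuous, each $\mu^j$ is continuous, hence continuous singular. Thus it remains only to construct $\mu$.

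I would build $\mu$ as a generalized Riesz product, in the spirit of the proof of Proposition \ref{sato}. Fix an enumeration $(w_k)_{k\ge1}$ of a countable dense subset of $\overline D$. For each $k$ pick a non-negative trigonometric polynomial $\phi_k$ with $\hat\phi_k(0)=1$ and $|\hat\phi_k(1)-w_k|<1/k$; a scaled and rotated Fej\'er kernel $\phi_k=(1-s_k)+s_kF_{m_k}(\,\cdot\,-\theta_k)$ does this, because $\hat\phi_k(1)=s_k\frac{m_k}{m_k+1}e^{-i\theta_k}$ sweeps a dense subset of $\overline D$ as $s_k\in[0,1]$, $\theta_k$ and $m_k$ vary. (Unlike in Proposition \ref{sato}, the coefficients are now complex, which is what lets the values fill the whole disk rather than a real interval.) With the whole sequence $(\phi_k)$, $m_k=\deg\phi_k$, fixed, I then choose a rapidly increasing $(r_k)\subset\mathbb N$ obeying the gap condition \eqref{gap}, so that the frequencies contributed by the factors $\phi_j(r_j\,\cdot\,)$ do not overlap. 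Let $\mu$ be the weak-$*$ limit of the partial products $p_k(t)=\prod_{j\le k}\phi_j(r_jt)$. This limit is a probability measure: the $p_k\ge0$ satisfy $\int p_k\,dm=1$, and by the gap condition $\hat p_k(n)$ stabilises as $k\to\infty$ for each fixed $n$. The gap condition also gives, in the limit, $\hat\mu(r_k)=\hat\phi_k(1)$, so $\{\hat\mu(n)\}$ contains the dense set $\{\hat\phi_k(1)\}$ and is therefore dense in $\overline D$.

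The step that must be controlled, and the main obstacle, is the continuity of $\mu$: density near the boundary of $\overline D$ requires infinitely many frequencies with $|\hat\mu(n)|$ close to $1$, and these must be kept sparse enough that $\mu$ acquires no atoms. I would handle this through the growth rate of $(r_k)$, imposing, in addition to \eqref{gap}, that $M_k:=\sum_{j\le k}r_jm_j\ge 2^k\prod_{j\le k+1}\|\phi_j\|_{L_2}^2$ (possible because the $\phi_j$ are fixed beforehand, so each bound is a known finite number when $r_k$ is chosen). By the gap condition the frequencies of $\mu$ in $[-M_k,M_k]$ are precisely those of $p_k$, whence $\sum_{|n|\le M_k}|\hat\mu(n)|^2=\|p_k\|_{L_2}^2=\prod_{j\le k}\|\phi_j\|_{L_2}^2$. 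For $M_{k-1}\le N<M_k$ this yields $\frac1{2N+1}\sum_{|n|\le N}|\hat\mu(n)|^2\le \prod_{j\le k}\|\phi_j\|_{L_2}^2/(2M_{k-1})\le 2^{-k}\to0$, so $\mu$ has no atoms by Wiener's theorem, i.e. $\mu$ is continuous. With continuity established, the reduction of the first paragraph delivers all the stated properties.
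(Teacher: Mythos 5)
Your proof is correct, and while the formal reduction coincides with the paper's, the heart of the argument --- producing a continuous probability with $\overline{\{\hat\mu(n):n\in\mathbb Z\}}=\bar{\mathbb D}$ --- is obtained by a genuinely different route. The paper gets such a measure abstractly from Rudin's Kronecker-set machinery: by \cite[Remark 5.2.5, Theorems 5.2.2(a) and 5.5.2(b)]{Ru} there is a Cantor set $C\subset\mathbb T$ which is a Kronecker set, and \emph{every} continuous probability supported on $C$ automatically has Fourier coefficients dense in $\bar{\mathbb D}$. You instead build the measure by hand as a generalized Riesz product $\prod_j\phi_j(r_jt)$ whose factors are damped, rotated Fej\'er kernels chosen so that $\hat\mu(r_k)=\hat\phi_k(1)$ runs through a prescribed dense subset of the disk; the gap condition \eqref{gap} makes each Fourier coefficient a single well-defined product (so the weak-$*$ limit exists and $\hat\mu(r_k)=\hat\phi_k(1)$ indeed holds), and your extra growth requirement on $M_k$ combined with Parseval ($\|p_k\|_{L_2}^2=\prod_{j\le k}\|\phi_j\|_{L_2}^2$, valid by uniqueness of frequency representations) gives the Wiener-lemma bound $2^{-k}$ and hence continuity; both the gap and the growth conditions can be met when choosing $r_k$, since all $\phi_j$ are fixed in advance. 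After that, the squeeze $\sigma(P_\mu,L_2)=\bar{\mathbb D}\subset\sigma(P_\mu,L_1)\subset\bar{\mathbb D}$, the non-convergence from $\sup_{n\ne0}|\hat\mu(n)|=1$ via \eqref{rho-fourier} and \cite{Ro}, and the singularity of all powers are exactly as in the paper, except that you derive singularity directly by the Riemann--Lebesgue argument (as in Proposition \ref{no-rho}) where the paper simply invokes the contrapositive of Theorem \ref{harris}; both are valid, and continuity of your $\mu$ supplies the adaptedness needed for \eqref{rho-fourier} by Lemma \ref{strict}. What each approach buys: the paper's citation is short and yields a measure with small (Cantor) support, which is the point of its subsequent remark; yours is self-contained, avoids Kronecker-set theory entirely, and runs parallel to the Riesz-product analysis the paper already performs in Proposition \ref{sato}, with the one essential new twist that the coefficients $\hat\phi_k(1)$ are allowed to be complex --- precisely what lets the $L_2$-spectrum fill the whole disk rather than a real interval.
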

\begin{proof} By \cite[Remark 5.2.5]{Ru}, every Cantor set $C \subset \mathbb T$
supports a continuous probability measure. Combining Theorems 5.2.2(a) and 5.5.2(b)
of \cite{Ru}, we obtain that there exists a Cantor set $C \subset \mathbb T$, which
is also a Kronecker set, such that every continuous probability $\mu$ supported on
$C$ satisfies $\overline{\{\hat\mu(n): n\in\mathbb Z\}}=\bar{\mathbb D}$ (where
$\mathbb D$ is the open unit disc).
Since $\|P_\mu\|_1=1$,$\ \sigma(P_\mu,L_1)=\sigma(P_\mu,L_2)$ follows from
$$
\sigma(P_\mu,L_2) =\overline{\{\hat\mu(n): n\in \mathbb Z\} } \subset
\sigma(P_\mu,L_1) \subset \bar{\mathbb D}=
\overline{\{\hat\mu(n): n\in \mathbb Z\} }=\sigma(P_\mu,L_2).
$$
Similarly, $\sigma(P_\mu,L_p)=\sigma(P_\mu,L_2)$ for $1<p< \infty$.

Since $\sigma(P_\mu,L_2)=\bar{\mathbb D}$, $\ \sup_{n\ne 0}|\hat\mu(n)|=1$,
so $\lim_n\|P_\mu^n-E\|_2 >0$ by \eqref{rho-fourier}. 
Then ${\lim_n\|P_\mu^n-E\|_p >0}$ for $p\ge 1$ follows from \cite{Ro}.

By Theorem \ref{harris}, $\mu$ and its convolution powers must be singular.
\end{proof}

{\bf Remarks.} 1. Since  $\sigma(P_\mu,L_2)=\bar{\mathbb D}$, the probability $\mu$ of Proposition \ref{no-rho-L1} is not Rajchman, and since $\sigma(P_\mu,L_2)$ is not real, 
$\mu$ is not symmetric.

2. In Proposition \ref{no-rho} we can have a probability with support $\mathbb T$.
Proposition \ref{no-rho-L1} adds the spectra equality, but the probability has a "small"
support.  In both propositions $\mu$ is not Rajchman.

3. By Lemma \ref{strict}, a non-discrete $\mu$, in particular a continuous one, 
is strictly aperiodic; hence $\|P_\mu^n f -Ef \|_p \to 0$ for every $f \in L_p$, 
$1 \le p< \infty$. (e.g. \cite{ConL}).
\bigskip

\section{Almost everywhere convergence of convolution powers}

In this section we study the almost everywhere (a.e.) convergence of of $P_\mu^nf$ for
every $f \in L_p(\mathbb T,m)$, $1\le p<\infty$, when $\mu$ is strictly aperiodic.
By Hopf's pointwise ergodic theorem (see \cite[Theorem VIII.6.6]{DS}), when $\mu$ is adapted 
we have $\frac1n\sum_{k=1}^n P_\mu^k f \to Ef$ a.e. for every $f \in L_1(\mathbb T,m)$.

We start with a general result, giving conditions for strengthening the pointwise ergodic theorem.
\begin{prop} \label{a-e}
Let $P$ be a Markov operator on $(S,\Sigma)$ with invariant probability $m$, assumed
ergodic (as in the Introduction). Fix $p \ge 1$. If $\|P^n-E\|_p \to 0$, then every
$f \in L_p(S,m)$ satisfies $P^nf \to Ef$ a.e.

If $Ef=0$, then $\sum_{k=1}^\infty P^k f/k$ converges a.e and in $L_p$-norm.
\end{prop}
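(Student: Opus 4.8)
The plan is to reduce both assertions to a single geometric-decay estimate for $\|P^n-E\|_p$, and then to pass from norm decay to a.e.\ convergence by means of the triangle inequality for the $L_p$-norm of a series of nonnegative functions. I would start from the algebraic identity $PE=EP=E$ (valid because $P$ fixes constants and $m$ is $P$-invariant) together with $E^2=E$, which give, by induction,
\[
(P-E)^n = P^n-E, \qquad n\ge 1 .
\]
Hence $a_n:=\|P^n-E\|_p=\|(P-E)^n\|_p$ is a submultiplicative sequence. Since $a_n\to 0$ by hypothesis, some $a_{n_0}=:r$ is $<1$; writing $n=kn_0+j$ with $0\le j<n_0$ and using submultiplicativity, I would obtain $a_n\le r^k\max_{0\le j<n_0}a_j$, and therefore a genuine geometric bound
\[
\|P^n-E\|_p \le C\rho^n \qquad (n\ge 1)
\]
for some constant $C$ and some $\rho\in(0,1)$.

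For the first assertion I would set $g_n:=P^nf-Ef=(P^n-E)f$, so that $\|g_n\|_p\le C\rho^n\|f\|_p$. Applying Minkowski's inequality to the countable sum of the nonnegative functions $|g_n|$ gives
\[
\Big\|\sum_{n=1}^\infty |g_n|\Big\|_p \le \sum_{n=1}^\infty \|g_n\|_p \le C\|f\|_p\sum_{n=1}^\infty \rho^n <\infty .
\]
Thus $\sum_n|g_n|$ lies in $L_p$ and is therefore finite a.e., which forces $g_n\to 0$ a.e., i.e.\ $P^nf\to Ef$ a.e. (For $p=\infty$ this convergence is in fact uniform a.e.)

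For the second assertion I would use $Ef=0$ to write $P^kf=(P^k-E)f$, whence $\|P^kf\|_p\le C\rho^k\|f\|_p$. Since $\sum_k \rho^k/k<\infty$, the norm estimate $\sum_k \|P^kf/k\|_p\le C\|f\|_p\sum_k \rho^k/k<\infty$ shows that $\sum_k P^kf/k$ converges absolutely in $L_p$-norm, hence in $L_p$. Feeding the same bound into Minkowski's inequality yields $\big\|\sum_k |P^kf|/k\big\|_p<\infty$, so $\sum_k |P^kf|/k$ is finite a.e., giving the a.e.\ (indeed absolute) convergence of $\sum_k P^kf/k$.

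I expect the only delicate point --- the crux of the argument --- to be the upgrade from the mere convergence $\|P^n-E\|_p\to 0$ to genuine geometric decay, since a sequence tending to $0$ need not do so geometrically; this is precisely where the identity $(P-E)^n=P^n-E$ and submultiplicativity are indispensable. Once the geometric bound is in hand, both conclusions are routine consequences of Minkowski's inequality, and the weight $1/k$ in the second part is more than the geometric decay actually requires.
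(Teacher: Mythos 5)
Your proof is correct. For the first assertion your route is essentially the paper's: the authors restrict $P$ to $L_p^0=\{f\in L_p:\int f\,dm=0\}$ and deduce geometric decay $\|P_0^n\|_p\le Mr^n$ from the spectral radius being strictly less than $1$ (which rests on the same submultiplicativity fact you use, transplanted from $P-E$ to the restricted operator $P_0$; the identity $(P-E)^n=P^n-E$ makes the two formulations interchangeable), and they then bound $\int\sum_n|P^nf|\,dm$ by $\sum_n\|P^nf\|_p$, which is your Minkowski step in the guise $\|\cdot\|_1\le\|\cdot\|_p$ on a probability space. For the second assertion, however, you genuinely diverge from the paper. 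The authors do not exploit the geometric decay of $\|P^kf\|_p$ there at all: they observe that $\|P^n-E\|_p\to0$ forces $(I-P)L_p(m)$ to be closed and equal to $L_p^0(m)$, write $f=(I-P)g$, and apply the Abel summation identity \eqref{eht}; the absolutely summable series $\sum_k P^kg/(k(k-1))$ is handled by Beppo Levi, and the remainder $P^{n+1}g/n$ tends to $0$ a.e.\ by Hopf's pointwise ergodic theorem. Your direct argument --- $Ef=0$ gives $\|P^kf\|_p\le C\rho^k\|f\|_p$, hence $\sum_k\|P^kf\|_p/k<\infty$, hence absolute convergence both in norm and a.e.\ --- is shorter and avoids both the closed-range step and Hopf's theorem (the paper's Remark 3 after the proposition notes that Hopf can indeed be bypassed in this way). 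What the paper's longer route buys is generality and reusability: the coboundary argument shows that $\sum_k P^kf/k$ converges a.e.\ for \emph{any} Markov operator with invariant probability and any $f\in(I-P)L_1$, with no uniform-ergodicity hypothesis (their Remark 1), and the identity \eqref{eht} is invoked again later in the paper (in Proposition \ref{EHT} and the surrounding remarks), whereas your estimate is tied to the hypothesis $\|P^n-E\|_p\to 0$ for the specific $f$ at hand.
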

\begin{proof} Put $L_p^0(m):=\{f\in L_p(m): \int f\,dm=0\}$. Then $L_p^0(m)$ is 
$P$-invariant, and $P_0:=P_{|L_p^0}$ satisfies, by assumption, $\|P_0^n\|_p \to 0$.
Then $\sigma(P_0,L_p^0) \cap \mathbb T=\{0\}$, so $r(P_0)<1$, and then 
$\|P_0^n\|_p\le Mr^n$ for some $M>0$ and $r<1$. For $f \in L_p^0(m)$ we have
$$
\int \sum_{n=0}^\infty |P^n f|\,dm =  \sum_{n=0}^\infty \int\, |P^n f|\,dm \le
 \sum_{n=0}^\infty \|P^n f\|_p\,dm = \sum_{n=0}^\infty \|P_0^n f\|_p\,dm< \infty.
$$
Hence $\sum_{n=0}^\infty |P^n f| < \infty$ a.e., so $P^nf \to 0$ a.e.

For $f \in L_p(m)$ we then have $P^nf -Ef=P^n(f-Ef) \to 0$ a.e.
\smallskip

The convergence $\|P^n-E\|_p \to 0$ implies that $(I-P)L_p(m)$ is closed, and equals
$L_p^0(m)$. Hence $Ef=0$ implies $f =(I-P)g$ with $g \in L_p(m)$. Then
\begin{equation} \label{eht}
\sum_{k=1}^n\frac{P^kf}k= \sum_{k=1}^n\frac{P^kg}k - \sum_{k=1}^n\frac{P^{k+1}g}k =
Pg -\sum_{k=2}^n\frac{P^kg}{k(k-1)}-\frac{P^{n+1}g}n.
\end{equation}
The series on the right converges in norm, and also a.e. by Beppo Levi's theorem.
The last term converges a.e. to 0 by Hopf's pointwise ergodic theorem.
\end{proof}

{\bf Remarks.} 1. The above proof of a.e. convergence of the series $\sum_{k=1}^n\frac{P^kf}k$
is valid for any $P$ with invariant probability $m$ and $f \in (I-P)L_1$. Cuny \cite{Cu}
proved that if the series converges in $L_1(m)$-norm, then it converges a.e.

2. By \cite[Theorem 3.1]{DL}, the series $\sum_{k=1}^n\frac{P^kf}k$ converges a.e. for
$P$ as in the previous remark and $f \in (I-P)^\alpha L_1$, $0<\alpha<1$. When
$(I-P)L_1$ is not closed (i.e. $\frac1n\sum_{k=1}^n P^k$ does not converge in $L_1$
operator norm), $(I-P)L_1 \subsetneq (I-P)^\alpha L_1$, by \cite{DL}.

3. The assumption $\|P^n-E\|_p \to 0$ yields a.e. convergence of $P^{n+1}g/n \to 0$ without 
using Hopf's theorem, since we can assume $Eg=0$, and then $\|P^n g\|_p \le Mr^n\|g\|_p$
yields $L_p$-norm and a.e. convergence of $\sum_{n=1}^\infty P^{n+1}g/n$.
\medskip

Applying Proposition \ref{a-e} to Theorems \ref{harris} and \ref{rajchman} we obtain

\begin{cor} \label{har2}
 Let $\mu$ be a probability on $\mathbb T$ such that $\mu^k$ is non-singular for some 
$k\ge 1$. Then for every $f \in L_1(\mathbb T,m)$ we have $P_\mu^nf \to Ef$ a.e.
\end{cor}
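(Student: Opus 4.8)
The plan is to read this off as a direct corollary of Theorem \ref{harris} and Proposition \ref{a-e}, so essentially no new work is needed beyond checking that the hypotheses of the latter are met. First I would invoke Theorem \ref{harris}: the assumption that $\mu^k$ is non-singular for some $k \ge 1$ is exactly its hypothesis, and it yields $\|P_\mu^n-E\|_p \to 0$ for every $1 \le p \le \infty$. I only need the single instance $p=1$, i.e. $\|P_\mu^n-E\|_1 \to 0$.

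Next I would verify that $P_\mu$ fits the framework of Proposition \ref{a-e}. By the setup in Section 2, $P_\mu$ is a Markov operator on $(\mathbb T,\mathcal B)$ with invariant probability $m$. Proposition \ref{a-e} additionally demands ergodicity, and this is the one point deserving a word: since $\mu^k$ is non-singular it is adapted, so $P_\mu^k$ is ergodic and hence so is $P_\mu$ — precisely the observation opening the proof of Theorem \ref{harris}. With the ergodic Markov structure and invariant probability $m$ confirmed, the proposition applies.

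With these two ingredients the conclusion is immediate: applying Proposition \ref{a-e} with $p=1$ to the convergence $\|P_\mu^n-E\|_1 \to 0$ gives $P_\mu^n f \to Ef$ a.e. for every $f \in L_1(\mathbb T,m)$. I do not expect any genuine obstacle, as the statement is a straightforward corollary; the only substantive choice is to use $p=1$ rather than a larger $p$, since that is what extends the a.e. convergence to all of $L_1$, the largest space in play. (One could equally note that the stronger hypothesis $\|P_\mu^n-E\|_p \to 0$ for every $p$ is available, but it is not needed here.)
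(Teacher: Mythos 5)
Your proposal is correct and is exactly the paper's argument: the paper obtains Corollary \ref{har2} by applying Proposition \ref{a-e} (with $p=1$) to the conclusion $\|P_\mu^n-E\|_1 \to 0$ of Theorem \ref{harris}, just as you do. Your explicit check of ergodicity (non-singular $\mu^k$ is adapted, so $P_\mu^k$ and hence $P_\mu$ is ergodic) is the same observation made at the start of the proof of Theorem \ref{harris}, so nothing differs in substance.
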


{\bf Remark.} The results of \cite{ConL} do not treat convergence for all $L_1$ functions.

\begin{cor}\label{r2}
 Let $\mu$ be a Rajchman probability on $\mathbb T$. Then for every $p>1$ and
$f \in L_p(\mathbb T,m)$ we have $P_\mu^nf \to Ef$ a.e.
\end{cor}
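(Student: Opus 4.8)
The plan is to obtain the a.e.\ convergence as an immediate consequence of Proposition \ref{a-e}, using Theorem \ref{rajchman} to supply its hypothesis. Proposition \ref{a-e} requires an ergodic Markov operator $P$ on a probability space together with $\|P^n-E\|_p \to 0$; once both ingredients are verified, the conclusion $P_\mu^n f \to Ef$ a.e.\ for $f \in L_p$ is automatic, so the entire argument reduces to checking that these two conditions hold for $P_\mu$.

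First I would verify ergodicity. Since $\mu$ is Rajchman it is continuous (by Wiener's lemma, as recorded in Remark 2 following Theorem \ref{rajchman}), hence non-discrete, so by Lemma \ref{strict} it is adapted; therefore $P_\mu$ is ergodic, and the standing hypotheses of Proposition \ref{a-e} are met on $(\mathbb{T},\mathcal B,m)$.

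Next I would fix $p$ with $1<p<\infty$. By Theorem \ref{rajchman} we have $\|P_\mu^n-E\|_p \to 0$, so Proposition \ref{a-e}, applied with $P=P_\mu$, yields $P_\mu^n f \to Ef$ a.e.\ for every $f \in L_p(\mathbb{T},m)$. This settles all finite exponents $p>1$ at once. For the endpoint $p=\infty$, should it be intended in ``$p>1$'', I would reduce to a finite exponent rather than apply Proposition \ref{a-e} directly (we do not have $\|P_\mu^n-E\|_\infty\to 0$ for a general Rajchman $\mu$): since $m$ is a probability measure, $L_\infty(\mathbb{T},m)\subset L_2(\mathbb{T},m)$, so any $f\in L_\infty$ already lies in $L_2$, and the case $p=2$ gives the conclusion.

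There is essentially no obstacle here; the corollary is a direct splicing of the two preceding results. The only point requiring a moment's attention is that Theorem \ref{rajchman} is stated for $1<p<\infty$, so if the borderline $p=\infty$ is meant to be covered it must be handled separately via the inclusion of $L_\infty$ in the finite $L_p$ spaces on the finite measure space $\mathbb{T}$, as indicated above.
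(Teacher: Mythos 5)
Your proposal is correct and follows exactly the paper's route: the paper derives Corollary \ref{r2} precisely by applying Proposition \ref{a-e} with the hypothesis $\|P_\mu^n-E\|_p\to 0$ for $1<p<\infty$ supplied by Theorem \ref{rajchman}. Your explicit verification of ergodicity (Rajchman $\Rightarrow$ continuous $\Rightarrow$ adapted via Lemma \ref{strict}) and your handling of $p=\infty$ via $L_\infty\subset L_2$ are careful additions, but they do not change the argument in any essential way.
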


{\bf Remarks.} 1. There are Rajchman probability measures with all powers singular
\cite{CoL}, \cite{Pa}, to which Corollary \ref{r2} applies.

2. Since $\sup_{n\ne 0}|\hat\mu(n)|<1$ when $\mu$ is Rajchman, Corollary \ref{r2} follows
also from \cite[Theorem 2.4]{ConL}.

3. The classical Cantor-Lebesgue measure $\mu$ is continuous with singular powers, and is not
 Rajchman, but  $\sup_{n\ne 0}|\hat\mu(n)|<1$ \cite{DL4}; hence $P_\mu^nf \to Ef$ a.e for 
any $f \in L_p(\mathbb T,m)$, $p>1$.

4. When $\mu$ is a symmetric strictly aperiodic probability, $P_\mu$ is symmetric on $L_2$, 
and $P_\mu^nf \to Ef$ a.e. for  every $f \in L_p(\mathbb T,m)$, $p>1$, by Stein's theorem
 \cite{St}.

5. If $\mu$ is strictly aperiodic, then $|\hat\mu(n)|<1$ for $n \ne 0$, which yields that 
$P_\mu^n f(x)\to Ef$ uniformly for every trigonometric polynomial $f$; hence for every
continuous $f$ we have $\|P_\mu^nf -Ef\|_{C(\mathbb T)}\to 0$. It follows that 
$P_\mu^*\nu=\nu \in C(T)^*=M(\mathbb T)$ implies $\nu=cm$ for some $c \in\mathbb C$. 
By the ergodic decomposition for mean ergodic operators, 
$\overline{(I-P_\mu)C(\mathbb T)}= \{f \in C(\mathbb T): Ef=0\}$.

6. Let $\mu$ be strictly aperiodic. Then for $0\ne k \in\mathbb Z$, $e_k(z):=z^k$ is in 
$(I-P)C(\mathbb T)$. If $f$ is a trigonometric polynomial with integral zero, i.e. 
$f =\sum_{0<|k| \le N} a_ke_k$, then $f \in (I-P)C(\mathbb T)$, and
$\sum_{n=1}^\infty \frac{P^n_\mu f(x)}n$ converges uniformly on $\mathbb T$, 
by \eqref{eht}.

\begin{prop} \label{EHT}
Let $\mu$ be a strictly aperiodic probability on $\mathbb T$. Then the following are 
equivalent.

(i) The series {(\rm one-sided ergodic Hilbert transform)}
\begin{equation}\label{eht-series}
\sum_{k=1}^\infty \frac{P_\mu^k f(x)}k
\end{equation}
 converges uniformly for every $f \in C(\mathbb T)$ with $Ef=0$.
\smallskip
 
(ii) \quad ${ \|\frac1n\sum_{k=1}^n P_\mu^k -E\|_{L_\infty} \to 0}$.
\smallskip

(iii) \quad ${\| P_\mu^n -E\|_{L_\infty} \to 0}$.

(iv) $\ \mu$ has a non-singular power.

(v) \quad ${\| P_\mu^n -E\|_{C(\mathbb T)} \to 0}$.

(vi) The series $\sum_{k=1}^\infty \frac1k P_\mu^k f$ converges weakly in $C(\mathbb T)$ 
	for every $f \in C(\mathbb T)$ with ${Ef=0}$.

(vii)   ${\ \| P_\mu^n -E\|_{B(\mathbb T)} \to 0}$  \text{\rm ($B(\mathbb T)$ is the space 
of all bounded Borel functions)}.
\end{prop}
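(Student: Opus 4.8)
The plan is to prove all seven conditions equivalent through a single implication cycle, isolating the one genuinely analytic step. Throughout I use that \emph{strict aperiodicity implies adaptedness}: if $\mu$ were not adapted, its support would generate a proper closed subgroup $\frac1d\mathbb Z/\mathbb Z$, whence $\hat\mu(d)=\int z^d\,d\mu=1$, contradicting $|\hat\mu(d)|<1$. So Corollary \ref{doeblin} and all earlier results apply. I would first dispose of the three operator-norm conditions at once: since $P_\mu^n-E$ is convolution by the signed measure $\sigma_n:=\check\mu^{*n}-m$, its operator norm on each of $C(\mathbb T)$, $L_\infty$ and $B(\mathbb T)$ equals the total variation $\|\sigma_n\|$ (the bound $\le\|\sigma_n\|$ is immediate, and testing against continuous $f$ with $\|f\|_\infty\le1$, using $\|\sigma_n* f\|_\infty\ge|\int f(y^{-1})d\sigma_n(y)|$ and $C(\mathbb T)^*=M(\mathbb T)$, gives $\ge\|\sigma_n\|$). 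Hence (iii) $\Leftrightarrow$ (v) $\Leftrightarrow$ (vii), all amounting to $\|\sigma_n\|\to0$. Since (iii) is precisely Doeblin's condition, (iii) $\Leftrightarrow$ (iv) by Corollary \ref{doeblin}(i).

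Next the forward ergodic directions. Clearly (iii) $\Rightarrow$ (ii) by Cesàro-averaging the norms. For (iii) $\Rightarrow$ (i), write $C^0:=\{f\in C(\mathbb T):Ef=0\}$; then (iii), in the form $\|P_\mu^n-E\|_{C(\mathbb T)}\to0$, says $\|(P_\mu|_{C^0})^n\|\to0$, so the spectral radius of $P_\mu|_{C^0}$ is $<1$ and $\|(P_\mu|_{C^0})^k\|\le M\rho^k$ with $\rho<1$; hence for $f\in C^0$ the series $\sum_k P_\mu^kf/k$ converges absolutely in sup-norm, giving (i), and (i) $\Rightarrow$ (vi) is trivial. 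For the first closing arrow, (ii) $\Rightarrow$ (iii), I would argue exactly as in the proof of Theorem \ref{no-doeblin} (in $L_\infty$, or dually in $L_1$ applied to the strictly aperiodic $\check\mu$): by \cite[Theorem 6.1]{GJ} condition (ii) makes $1$ isolated in $\sigma(P_\mu,L_\infty)$, by \cite{L} $P_\mu$ is then quasi-compact, so its peripheral spectrum consists of finitely many unimodular eigenvalues, each a root of unity by \cite[Proposition 2.1]{CoL}; but strict aperiodicity leaves only $1$, since $P_\mu g=\lambda g$ with $|\lambda|=1\ne\lambda$ and $g\in L_\infty\subset L_1$ forces, comparing Fourier coefficients and using $|\hat\mu(\pm n)|<1$ for $n\ne0$, that $\hat g\equiv0$. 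Thus $\|P_\mu^n-E\|_\infty\to0$.

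The main obstacle is the second closing arrow, (vi) $\Rightarrow$ (ii), where weak convergence must be upgraded to a norm (spectral) conclusion. From (vi) and the Banach--Steinhaus theorem the partial-sum operators $T_n:=\sum_{k=1}^n P_\mu^k/k$ satisfy $M:=\sup_n\|T_n\|_{C^0}<\infty$. Abel summation gives, for $0<r<1$, the operator identity $\sum_{k\ge1} r^kP_\mu^k/k=(1-r)\sum_{k\ge1}r^kT_k$ on $C^0$, so $\|{-}\log(I-rP_\mu)\|_{C^0}\le Mr\le M$ \emph{uniformly} in $r$. Since $P_\mu$ is a contraction of $C(\mathbb T)$, $\sigma(P_\mu|_{C^0})\subset\overline{\mathbb D}$, and $f_r(z)=-\log(1-rz)$ is holomorphic on $\{|z|<1/r\}\supset\overline{\mathbb D}$; the spectral mapping theorem therefore yields $|\log(1-r\lambda)|\le M$ for every $\lambda\in\sigma(P_\mu|_{C^0})$ and every $r<1$ (the spectral radius being dominated by the norm). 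Letting $r\to1$ excludes $\lambda=1$, where $\log(1-r)\to-\infty$; hence $1\notin\sigma(P_\mu|_{C^0})$ and $I-P_\mu$ is invertible on $C^0$. Then $\frac1n\sum_{k=1}^nP_\mu^k=\frac1nP_\mu(I-P_\mu^n)(I-P_\mu)^{-1}\to0$ in operator norm on $C^0$, i.e. uniform mean ergodicity in $C(\mathbb T)$, which equals (ii) by the total-variation identification above. The delicate point is precisely this Abel bound on $-\log(I-rP_\mu)$ combined with the spectral mapping theorem: it is what converts the mere uniform boundedness of the ergodic--Hilbert-transform partial sums into separation of $\sigma(P_\mu|_{C^0})$ from the point $1$, and I expect this to be the step requiring the most care.
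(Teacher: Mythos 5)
Your proof is correct, and it takes a genuinely different route from the paper's at precisely the two places where the paper leans on external results. (a) For the three operator-norm conditions, the paper proves (iii)$\Leftrightarrow$(v) through the multiplier identification of \cite{La} (convolution operators on $L_1$ versus $M(\mathbb T)$) and obtains (vii) separately from quasi-compactness on $B(\mathbb T)$ via \cite[Proposition VIII.3]{BR}; you instead note that $P_\mu^n-E$ is convolution by the signed measure $\sigma_n=\check\mu^{*n}-m$, whose operator norm on each of $C(\mathbb T)$, $L_\infty(m)$ and $B(\mathbb T)$ equals the total variation $\|\sigma_n\|$, so (iii), (v), (vii) collapse into the single statement $\|\sigma_n\|\to 0$. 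This is more elementary, bypasses \cite{La} and \cite{BR} entirely, and proves slightly more (the norms are equal, not merely simultaneously convergent); the one detail to add is that for the $L_\infty$ lower bound one should observe that $\sigma_n*f$ is continuous when $f$ is, so its essential supremum equals its supremum because $m$ has full support. (b) For the Hilbert-transform conditions, the paper gets (i)$\Leftrightarrow$(vi) and (i)$\Rightarrow$(ii) from the operator-theoretic result \cite[Theorem 1.1]{CoL1}; you prove (vi)$\Rightarrow$(ii) from scratch: Banach--Steinhaus bounds the partial sums $T_n$ on $C^0$, Abel summation converts this into a bound on $-\log(I-rP_\mu|_{C^0})$ uniform in $r<1$, and the spectral mapping theorem then excludes $1$ from $\sigma(P_\mu|_{C^0})$, yielding uniform mean ergodicity. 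This self-contained argument -- in effect a proof, in this setting, of the general result the paper attributes to \cite{CoL2} -- is the real substance of your proposal and is sound; combined with your elementary (iii)$\Rightarrow$(i) (spectral radius $<1$ on $C^0$, hence absolute convergence of the series) it correctly closes the cycle (iii)$\Rightarrow$(i)$\Rightarrow$(vi)$\Rightarrow$(ii)$\Rightarrow$(iii), the last arrow being the paper's own quasi-compactness argument. Two small slips, neither damaging: \cite{L} yields quasi-compactness from uniform ergodicity together with a finite-dimensional fixed space, not from isolation of $1$ in the spectrum, so your detour through \cite[Theorem 6.1]{GJ} is unnecessary (you already hold (ii), and, as you rightly make explicit, strict aperiodicity implies adaptedness, hence ergodicity and a one-dimensional fixed space); and your total-variation identification must also be invoked for the Ces\`aro averages, i.e.\ for convolution by $\frac1n\sum_{k=1}^n\check\mu^{*k}-m$, in order to read your final conclusion as condition (ii) -- which it does verbatim.
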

\begin{proof}
(ii) implies (i). Since $P_\mu$ is a contraction of $C(\mathbb T)$, and 
$\sigma(P_{\hat\mu},L_1)=\sigma(P_{\hat\mu},M(\mathbb T))$ by \cite{La}, (ii) is 
equivalent to $\|\frac1n\sum_{k=1}^nP_\mu^{*k}-E^*\|_{M(\mathbb T)} \to 0$, 
which is equivalent to $ \|\frac1n\sum_{k=1}^n P_\mu^k -E\|_{C(\mathbb T)} \to 0$. 
Then $(I-P_\mu)C(\mathbb T)$ is closed, and every $f \in C(\mathbb T)$ with $Ef=0$
is in $(I-P_\mu)C(\mathbb T)$, so the uniform convergence of the series 
\eqref{eht-series} follows from \eqref{eht}.
\smallskip

(i) implies (ii). We assume the uniform convergence of \eqref{eht-series} for every 
continuous $f$ with $Ef=0$. By an operator theoretic result 
\cite[Theorem 1.1]{CoL1}, $(I-P_\mu)C(\mathbb T)$ is closed. 
Let $Q_\mu$ be the restriction of $P_\mu$ to ${\mathbf C}:=(I-P_\mu)C(\mathbb T)$. 
Since $P_\mu$ is mean ergodic,
$$
\Big\|\frac1n\sum_{k=1}^nP_\mu^k -E\Big\|_{C(\mathbb T)} =
\Big\|\frac1n\sum_{k=1}^nP_\mu^k(I-E) \Big\|_{C(\mathbb T)} \le 
\Big\|\frac1n\sum_{k=1}^nQ_\mu^k\Big\|_{\mathbf C}\|I-E\|_{C(\mathbb T)} \to 0.
$$
Going through the adjoints as above we obtain 
$ \|\frac1n\sum_{k=1}^n P_\mu^k -E\|_{L_\infty} \to 0$.
\smallskip

(ii) implies (iii). Since the space of fixed points of $P_\mu$ on $L_\infty$ is 
one-dimensional, by \cite{L} $P_\mu$ is quasi-compact on $L_\infty$. Hence 
$\sigma(P_\mu,L_\infty)\cap\mathbb T$ consists only of finitely many poles (which
are eigenvalues) \cite[p. 711]{DS}. By strict aperiodicity $P_\mu$ has no unimodular 
eigenvalues except 1, so $\|P_\mu^n-E\|_{L_\infty} \to 0$.

Obviously (iii) implies (ii).
\smallskip

The equivalence of (iv) and (iii) is by Corollary \ref{doeblin}.
\smallskip

 Since $\sigma(P_{\hat\mu},L_1)=\sigma(P_{\hat\mu},M(\mathbb T))$, we have
$\|P_\mu^{*n}-E^*\|_{M(\mathbb T)} \to 0$ if and only if
${\|P_\mu^{*n}-E\|_{L_1}} \to 0$, which proves the equivalence of (iii) and (v).
\smallskip

The equivalence of (i) and (vi) follows from \cite[Theorem 1.1]{CoL1}.
\smallskip

(iv) implies that $P_\mu$ is quasi-compact on $B(\mathbb T)$, by 
\cite[Proposition VIII.3]{BR}. By strict aperiodicity $P_\mu$ on $B(\mathbb T)$ has no 
unimodular eigenvalues except 1, so by \cite[ p. 711]{DS} (vii) holds.

Clearly (vii) implies (v).
\end{proof}

{\bf Remark.} If a power-bounded operator $T$ on a Banach space $X$ is uniformly ergodic,
i.e. $\frac1n \sum_{k=1}^n T^k$ converges in operator norm, then $(I-T)X$ is (easily
shown to be) closed, and by \eqref{eht} $\sum_{k=1}^n \frac1k T^kv$ converges for
every $v$ in $(I-T)X=\overline{(I-T)X}$; this yields a direct proof of (v) implies (i) in
Proposition \ref{EHT}. It is proved in \cite{CoL2} that if $\sum_{k=1}^n \frac1k T^kv$ 
converges for every $v$ in $\overline{(I-T)X}$, then $T$ is uniformly ergodic.

\begin{prop} \label{sso}
 There exists a continuous strictly aperiodic probability $\mu$ on $\mathbb T$ (necessarily
with all its powers singular) such that for some non-trivial $A \in \mathcal B$ we have
\begin{equation} \label{SSO}
\limsup_n P_\mu^n1_A=\limsup_n\check\mu^n*1_A= 1 \ a.e.  \ \text{\rm and } \ 
\liminf_n P_\mu^n1_A=\liminf_n \check\mu^n*1_A =0\ a.e.;
\end{equation}
 hence a.e $P_\mu^n1_A$ does not converge.
\end{prop}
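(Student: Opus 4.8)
The plan is to build $\mu$ by the Eisner--Grivaux--type recipe already used in Proposition \ref{no-rho}, adjusted so that a sweeping out phenomenon becomes visible, and then to invoke the standard machinery that upgrades sweeping out to the full oscillation \eqref{SSO}. Two preliminary remarks simplify matters. First, the two quantities in \eqref{SSO} coincide, since $P_\mu^n f=\check\mu^n*f$ by the very definition of $P_\mu$, so only one sequence must be controlled. Second, the parenthetical assertion that all powers are singular is forced rather than assumed: if some $\mu^k$ were non-singular, then Theorem \ref{harris} would give $\|P_\mu^n-E\|_\infty\to 0$, and Proposition \ref{a-e} would then yield $P_\mu^n1_A\to m(A)$ a.e., contradicting \eqref{SSO}.

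For the construction I would fix a symmetric continuous probability $\mu$ with $\hat\mu(n_k)\to 1$ along a rapidly increasing sequence $(n_k)$ satisfying $n_k\mid n_{k+1}$ and $n_{k+1}/n_k\to\infty$, as produced (after symmetrization) by \cite[Propositions 3.9 and 3.1]{EG}. Being continuous, $\mu$ is strictly aperiodic by Lemma \ref{strict}, so $|\hat\mu(n)|<1$ for every $n\ne 0$ while $\sup_{0\ne n}\hat\mu(n)=1$; and, exactly as in Proposition \ref{no-rho}, $|\widehat{\mu^j}(n_k)|=|\hat\mu(n_k)|^j\to 1$ rules out any non-singular power, so all powers are singular. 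The core task is then to show that $\{P_\mu^n\}$ has the strong sweeping out property, and I would do this in two stages. The first stage is a sweeping out estimate: for every $\varepsilon>0$ there is a Borel set $E$ with $m(E)<\varepsilon$ and $m\{x:\sup_n P_\mu^n1_E(x)>1-\varepsilon\}>1-\varepsilon$. This is where the resonance enters. Since $\mu$ is symmetric, $P_\mu^j e_{n_k}=\hat\mu(n_k)^j e_{n_k}$, so the mode $e_{n_k}$ is damped only on the long time scale $j\approx(1-\hat\mu(n_k))^{-1}$, while the divisibility $n_k\mid n_{k+1}$ and the super-lacunary growth make the successive frequency scales behave like independent martingale blocks. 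Choosing $E$ adapted to these scales and estimating the maximal function through the Fourier identity $\|P_\mu^n1_E\|_2^2=\sum_j|\hat\mu(j)|^{2n}\,|\widehat{1_E}(j)|^2$ together with a Borel--Cantelli argument across the scales, one finds that $\sup_n P_\mu^n1_E$ stays near $1$ off a small set even though $m(E)$ is small.

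The second stage upgrades this to \eqref{SSO}. The convolution operators $P_\mu^n$ all commute with the rotations $R_tf(x)=f(xt)$, which act ergodically on $(\mathbb T,m)$; hence the sweeping out estimate is invariant under rotating $E$, and this homogeneity is precisely the input for the standard Baire-category $0$--$1$ law in the complete metric space of Borel sets with $d(A,B)=m(A\triangle B)$: the sets $A$ with $\limsup_n P_\mu^n1_A=1$ a.e. form a dense $G_\delta$, and applying the same to complements via $P_\mu^n1_{A^c}=1-P_\mu^n1_A$ produces a dense $G_\delta$ with $\liminf_n P_\mu^n1_A=0$ a.e. Any $A$ in the intersection of these two residual sets satisfies \eqref{SSO}, whence $P_\mu^n1_A$ diverges a.e.

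The step I expect to be the main obstacle is the sweeping out estimate of the first stage: converting the single-frequency resonance $\hat\mu(n_k)\to 1$ into genuine a.e.\ largeness of the maximal function $\sup_n P_\mu^n1_E$. This is the only point that needs quantitative harmonic analysis rather than the soft spectral and functional-analytic input of the earlier sections, and it is what dictates the choice of a super-lacunary divisibility chain $(n_k)$, so that distinct scales are sufficiently independent for the Borel--Cantelli / second-moment argument to push the maximal function up to $1$ on almost all of $\mathbb T$.
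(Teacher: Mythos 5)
Your construction cannot work, and the obstruction is a theorem in this very paper. You symmetrize the Eisner--Grivaux measure, so your $\mu$ is a symmetric continuous (hence, by Lemma~\ref{strict}, strictly aperiodic) probability. But Theorem~\ref{os} (via Oseledets/Stein) says that for \emph{every} symmetric strictly aperiodic probability on $\mathbb T$ one has $P_\mu^n f \to Ef$ a.e.\ for all $f \in L_1(\mathbb T,m)$; in particular $P_\mu^n 1_A \to m(A)$ a.e.\ for every Borel set $A$, so \eqref{SSO} fails for every non-trivial $A$. Consequently the ``sweeping out estimate'' you propose to prove in your first stage is simply false for your measure, and no second-moment/Borel--Cantelli argument can rescue it: symmetry (equivalently, real Fourier coefficients) is exactly what forbids the oscillation you are trying to create. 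The paper makes this point explicitly in the example following Theorem~\ref{os}, where symmetrizing the measure of Proposition~\ref{no-rho-L1} yields a.e.\ convergence of $P_\mu^n f$ for all $f\in L_1$ even though $\lim_n\|P_\mu^n-E\|_p>0$ for all $p$. More generally, resonance along a single sequence of \emph{real} values $\hat\mu(n_k)\to 1$ is compatible with a.e.\ convergence for all integrable functions, so it can never be the engine of a divergence proof.

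The paper's actual proof uses a hypothesis genuinely stronger than $\hat\mu(n_k)\to 1$: it takes the (necessarily non-symmetric) continuous measure of Proposition~\ref{no-rho-L1}, supported on a Kronecker set, for which $\overline{\{\hat\mu(n): n\in\mathbb Z\}}=\bar{\mathbb D}$, so that the closure of the Fourier coefficients contains the whole unit circle; it then invokes Conze--Lin \cite[Corollary 3.2]{ConL}, whose hypothesis is precisely this circle condition, and which directly yields the strong sweeping out property for both $\mu$ and $\check\mu$ --- that is, a dense $G_\delta$ of non-trivial sets $A$ satisfying \eqref{SSO}. Your second stage (rotation-invariance plus Baire category, and passing to complements via $P_\mu^n 1_{A^c}=1-P_\mu^n 1_A$) is essentially a sketch of what the strong sweeping out property packages, but it has no valid first stage to stand on. Your preliminary observations (that $P_\mu^n f=\check\mu^n * f$, and that all powers of $\mu$ must be singular, since a non-singular power would give $\|P_\mu^n-E\|_\infty\to 0$ by Theorem~\ref{harris} and then a.e.\ convergence by Proposition~\ref{a-e}) are correct, but they do not repair the central gap: you must abandon symmetry and arrange clustering of $\{\hat\mu(n)\}$ on the entire unit circle, not merely at the point $1$.
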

\begin{proof} Let $\mu$ be a continuous probability on $\mathbb T$ with
$\overline{\{\hat\mu(n): n\in\mathbb Z\}}=\bar{\mathbb D}$, as in the proof of  
Proposition \ref{no-rho-L1}. By Lemma \ref{strict}, continuity implies strict aperiodicity.

Since $\widehat{\check\mu}(n)=\hat\mu(-n)$ and $\overline{\{\hat\mu(n): n\in\mathbb Z\}}$ 
contains the unit circle, by \cite[Corollary 3.2]{ConL}, both $\mu$ and $\check\mu$ have 
the "strong sweeping out property", which for $\check\mu$ means that there is a $G_\delta$ 
dense subset of $\mathcal B$ consisting of non-trivial $A$ satisfying  \eqref{SSO}.
\end{proof}

{\bf Remarks.} 1. By \cite{Roj}, the strictly aperiodic discrete  probability 
$\mu:= \frac12(\delta_1+\delta_z)$, with $z \in \mathbb T$ not a root of unity, 
satisfies  the strong sweeping out property.

2.  Proposition \ref{sso} was proved somewhat differently in \cite[Proposition 3.4]{ConL}.
\medskip

{\bf Example.} {\it A probability $\mu$  with singular powers and $P_\mu^nf \to Ef$ a.e. 
$\forall f \in L_1$.}

\noindent
Let $z \in \mathbb T$ which is not a root of unity (i.e. $z^n \ne 1$ for $n\ge 1$). Then 
$\tau x:=zx$. $x \in\mathbb T$ is an invertible ergodic measure preserving transformation 
of $(\mathbb T,m)$. Define $\mu:=\frac13(\delta_{z^{-1}}+\delta_1 +\delta_z)$. By symmetry,
$P_\mu f=\mu*f=\frac13(f\circ\tau^{-1}+f+f\circ\tau)$. By \cite[Corollary 1.2]{BJR},
for every $f \in L_1(\mathbb T,m)$ we have $P_\mu^n f\to \int f\,dm$ a.e.
\smallskip

A generalization of the example, is the following. Let $(\mu_k)_{k \in \mathbb Z}$ satisfy 
$$ 
(i)\ \mu_k \ge 0 \text{ and }\sum_{k=-\infty}^\infty \mu_k=1. \quad 
(ii)\ \sum_{k=-\infty}^\infty k^2\mu_k < \infty. \quad 
(iii)\ \sum_{k=-\infty}^\infty k\mu_k =0.
$$
For $z$ and $\tau$ as above, put $\mu= \sum_{k=-\infty}^\infty \mu_k\delta_{z^k}$.
By \cite{BC}, $P_\mu^nf$ converges a.e. for every $f \in L_1(\mathbb T,m)$ 
(for $f \in L_p$, $p>1$, a.e. convergence was proved in \cite{BJR});
when $\mu_0 \ne 1$,  $\mu$ is adapted and the limit is $Ef$. Wedrychowicz \cite{We}
gave sufficient conditions on $(\mu_k)$ for the above a.e. convergence for every 
$f \in L_1$ when $\sum_{k=-\infty}^\infty k^2\mu_k = \infty$;  non-symmetric
examples are presented.
\smallskip

{\bf Remark.} In the example $\mu$ is discrete, so $\lim_n\|P_\mu^n-E\|_p>0$ 
for every $1 \le p \le \infty$, since $\lim_n \|P_\mu^n-E\|_2>0$ by \cite{DL4}, 

\medskip

It is known \cite{Or} that in general Stein's theorem fails for $p=1$, 
but a.e convergence holds  if $\int |f| \log ^+|f| dm< \infty$, by Rota's theorem 
(see \cite[p. 92]{Starr}).

\begin{theo} \label{os}
Le $\mu$ be symmetric and strictly aperiodic on $\mathbb T$. Then for every 
$f \in L_1(\mathbb T,m)$, $\ P_\mu^n f \to Ef$ a.e.
\end{theo}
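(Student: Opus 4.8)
The plan is to reduce to a mean-zero statement and then combine a.e. convergence on a dense subclass with a maximal inequality via the Banach principle. First I would note that since $P_\mu$ fixes constants and $Ef$ is constant, writing $f=Ef+(f-Ef)$ reduces the claim to showing $P_\mu^n g\to 0$ a.e. whenever $g\in L_1(\mathbb T,m)$ with $Eg=0$; splitting into positive and negative parts I may further assume $g$ is real. For the dense subclass I would invoke the $L\log L$ convergence recorded just before the statement: by Rota's theorem (see \cite{Starr}) $P_\mu^n g\to 0$ a.e. for every $g$ with $\int|g|\log^+|g|\,dm<\infty$, and such $g$ with $Eg=0$ are dense in $\{g\in L_1:Eg=0\}$. (Alternatively, for mean-zero trigonometric polynomials even the one-sided ergodic Hilbert transform converges, by Remark 6 following Corollary \ref{r2}, and these are dense as well.)

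Second, I would set up the Banach principle. Denote the maximal operator $Mg:=\sup_n|P_\mu^n g|$. Since each $P_\mu^n$ is continuous in measure and a.e. convergence already holds on a dense subclass, the a.e. convergence of $P_\mu^n g$ for all $g\in L_1$ would follow once $M$ is shown to be of weak type $(1,1)$, i.e. $m(\{Mg>\lambda\})\le C\|g\|_1/\lambda$ for all $\lambda>0$ and $g\in L_1(\mathbb T,m)$. This weak-type bound is the heart of the matter.

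Third, I would prove the maximal inequality, and this is where symmetry must enter. The point is delicate: by Stein's maximal theorem \cite{St} the self-adjointness of $P_\mu$ yields strong $(p,p)$ bounds for $M$ only in the range $1<p\le\infty$, and these degenerate as $p\downarrow 1$; indeed for general symmetric contractions the endpoint fails (this is the content of the discussion preceding the statement, with $L\log L$ the natural substitute provided by Rota, and the failure recorded via \cite{Or}). Thus the $L_1$ endpoint cannot be reached by interpolation or by soft operator theory alone, and one must exploit the translation-invariant (convolution) structure $P_\mu^n=\mu^{*n}*\,\cdot\,$ together with symmetry. Concretely, I would seek a weak-$(1,1)$ maximal inequality for the convolution maximal operator $g\mapsto\sup_n|\mu^{*n}*g|$ for symmetric $\mu$, along the lines of the oscillation/variation estimates available for self-adjoint convolution powers and the maximal inequalities of Bellow–Calder\'on type (the same phenomenon underlying the $L_1$ a.e. convergence in the discrete examples governed by \cite{BJR} and \cite{BC}).

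The main obstacle is precisely this endpoint maximal inequality. Symmetry is not a convenience but essential: the non-symmetric example of Proposition \ref{sso} is continuous and strictly aperiodic, yet has $\limsup_n P_\mu^n 1_A=1$ and $\liminf_n P_\mu^n 1_A=0$ a.e. for a non-trivial $A$, so no weak-type maximal bound can hold there and a.e. convergence genuinely fails. Hence any correct argument must use the self-adjointness of $P_\mu$ in an essential way to rule out such sweeping-out, and establishing the corresponding weak-$(1,1)$ control for an arbitrary symmetric strictly aperiodic $\mu$ — including singular continuous measures whose convolution powers are badly behaved — is the delicate harmonic-analytic core of the proof.
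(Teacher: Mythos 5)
Your outline correctly reduces Theorem \ref{os}, via the Banach principle, to a weak-type $(1,1)$ inequality for the maximal operator $Mg=\sup_n|\mu^{*n}*g|$ --- and then stops there: you yourself call this bound ``the heart of the matter'' and ``the delicate harmonic-analytic core,'' but you give no proof of it, only directions in which one might ``seek'' it. That inequality is precisely where the entire difficulty of the theorem sits, and none of the tools you point to supplies it: Stein's theorem \cite{St} gives maximal bounds only for $p>1$; Rota's theorem (via \cite{Starr}) gives only the $L\log L$ class (and for symmetric $P$ it is, strictly speaking, a statement about the alternating sequence $P^nP^{*n}f=P^{2n}f$, so even your dense-class step needs a word about the odd powers); and the Bellow--Calder\'on weak-$(1,1)$ inequality \cite{BC} is proved for discrete measures $\sum_k\mu_k\delta_{z^k}$ with zero mean and finite second moment, so it says nothing about a general symmetric strictly aperiodic $\mu$, e.g.\ a singular continuous one with badly behaved powers. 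What you have, therefore, is a correct reduction of the theorem to an unproved statement that is at least as strong as the theorem itself (for positive translation-invariant operators, a.e.\ convergence on all of $L_1$ and weak-$(1,1)$ control of the maximal operator are essentially equivalent by Sawyer-type principles). That is a genuine gap, not a completable sketch.

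The paper's proof never touches a maximal inequality. It invokes Oseledets \cite[Theorem 3]{Os} for the rotation action of $\mathbb T$ on $(\mathbb T,\mathcal B,m)$: symmetry of $\mu$ together with strict aperiodicity (which yields condition (2) of \cite{Os}) give a.e.\ convergence of $P_\mu^nf$ for every $f\in L_1(\mathbb T,m)$, and the limit is then identified as $Ef$ by Hopf's pointwise ergodic theorem, since $P_\mu$ is ergodic. If you want to complete your argument, the honest fix is to replace your step 3 by this citation --- at which point your steps 1 and 2 become unnecessary --- because no proof of the weak-$(1,1)$ bound in this generality follows from the references you list. Your remark that symmetry is essential (Proposition \ref{sso} exhibits a non-symmetric continuous strictly aperiodic $\mu$ with strong sweeping out) is correct and a good sanity check, but it constrains the proof rather than providing one.
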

\begin{proof} We will deduce our result from the general work of Oseledets \cite{Os}.
We look at the action of $\mathbb T$ by rotations on $(\mathbb T,\mathcal B,m)$.
The action is ergodic, and by strict aperiodicity condition (2) of \cite{Os} is satisfied.
By \cite[Theorem 3]{Os}, $P_\mu^nf$ converges a.e. whenever $f \in L_1(\mathbb T,m)$.
By  Hopf's pointwise ergodic theorem,
$\lim_n P_\mu^n f =\lim_N \frac1N \sum_{n=1}^N P_\mu^n f = \int f\,dm$  a.e.,
since $P_\mu$ is ergodic.
\end{proof} 

\begin{cor} \label{cl-ae}
Let $\mu$ be the classical Cantor-Lebesgue measure. Then for every 
$f \in L_1(\mathbb T,m)$, $\mu^n*f \to \int f\,dm$ a.e.
\end{cor}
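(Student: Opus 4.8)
The plan is to read Corollary \ref{cl-ae} as an immediate specialization of Theorem \ref{os}, so that the whole task reduces to checking that the classical Cantor--Lebesgue measure $\mu$ satisfies the two standing hypotheses of that theorem: that it is symmetric and that it is strictly aperiodic. Once both are verified, Theorem \ref{os} supplies the a.e.\ convergence $P_\mu^n f \to Ef$ for every $f \in L_1(\mathbb T,m)$, and the statement of the corollary is obtained merely by rewriting this in convolution notation.

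First I would record the two structural facts. The classical Cantor--Lebesgue measure is continuous (it has no atoms) and symmetric: the classical Cantor set is invariant under the reflection about its midpoint, i.e.\ under $x\mapsto x^{-1}$ on $\mathbb T$, and the self-similar measure it carries inherits this invariance, so $\check\mu=\mu$. This was already noted in Remark 1 following Theorem \ref{no-doeblin}. Continuity makes $\mu$ in particular non-discrete, so Lemma \ref{strict} applies directly and yields that $\mu$ is adapted and strictly aperiodic. (The sharper fact $\sup_{n\ne 0}|\hat\mu(n)|<1$, recorded in Remark 3 after Corollary \ref{r2}, is more than we need here; strict aperiodicity, i.e.\ $|\hat\mu(n)|<1$ for all $n\ne 0$, suffices.) Thus both hypotheses of Theorem \ref{os} are met.

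It then remains only to apply Theorem \ref{os} and to translate its conclusion. Since $P_\mu f=\check\mu*f$ and $\check\mu=\mu$ by symmetry, we have $P_\mu^n f=\mu^n*f$, the $n$-fold convolution power of $\mu$ acting on $f$, while $Ef=\int f\,dm$ by definition. Substituting these into $P_\mu^n f \to Ef$ a.e.\ gives exactly $\mu^n*f\to\int f\,dm$ a.e., as asserted. There is no genuine obstacle in this argument; all the analytic content sits inside Theorem \ref{os}. The only point requiring a moment's care is the identification $P_\mu^n f=\mu^n*f$, which is precisely where symmetry is used: were $\mu$ not symmetric, Theorem \ref{os} would be unavailable and one could only invoke the weaker $p>1$ conclusion of Corollary \ref{r2} (via $\sup_{n\ne 0}|\hat\mu(n)|<1$), which does not reach $L_1$.
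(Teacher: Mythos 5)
Your proposal is correct and matches the paper's intended argument exactly: the paper states Corollary \ref{cl-ae} as an immediate consequence of Theorem \ref{os}, relying on precisely the facts you verify (the Cantor--Lebesgue measure is symmetric and continuous, hence strictly aperiodic by Lemma \ref{strict}, and symmetry gives $P_\mu^n f=\mu^n*f$). Nothing is missing; your write-up simply makes explicit the specialization the paper leaves tacit.
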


{\bf Example} {\it A continuous symmetric $\mu$ with singular powers, such that 
$P_\mu^n f \to Ef$ a.e. $\forall f \in L_1$, but $\sigma(P_\mu,L_p) \ne \sigma(P_\mu,L_2)$
for $1\le p <2$.}

\noindent
Let $\mu$ be Sato's continuous symmetric  probability measure of Proposition \ref{sato}.
Then $P_\mu^nf \to Ef$ a.e. for any $f \in L_1$  by Theorem \ref{os}.
\smallskip

{\bf Remark.} As mentioned earlier, in the example the peripheral spectra
$\sigma(P_\mu,L_p)\cap\mathbb T$ are all the same for $1<p<\infty$, by
\cite[Theorem 4.3]{CL26}, but this is not true for $p=1$.

\medskip

{\bf Example} {\it A continuous symmetric $\mu$ with singular powers such that
 $P_\mu^nf \to Ef$ a.e. $\forall f \in L_1$, but $\lim_n\|P_\mu^n-E\|_p > 0$ for $p\ge 1$.}

\noindent
Let $\mu_0$ be the continuous probability measure of the proof of Proposition \ref{no-rho-L1},
and put $\mu:=\frac12(\mu_0+\check\mu_0)$. Then $\mu$ is symmetric and $P_\mu$ is
symmetric on $L_2(m)$. Since $\{\hat\mu_0(n): n\in\mathbb Z\}$ is dense in $\bar{\mathbb D}$,
$\{\hat\mu(n): n\in\mathbb Z\}$ is dense in $[-1,1]$, so $\sigma(P_\mu,L_2)=[-1,1]$;
hence $1$ is not isolated in the spectrum, and $\lim_n\|P_\mu^n-E\|_2>0$ (so necessarily
all powers of $\mu$ are singular). Hence $\lim_n\|P_\mu^n-E\|_1>0$ by Proposition
 \ref{singular}, and $\lim_n\|P_\mu^n-E\|_p>0$ for $1<p< \infty$ by \cite{Ro}. 
By Theorem \ref{os}, $P_\mu^nf \to Ef$ a.e for $f \in L_1(\mathbb T,m)$.
 \medskip

{\bf Problem.} {\it Are there necessary and sufficient conditions for $\mu$ strictly aperiodic,
 in terms of $\{\hat\mu(n)\}$,
for $P_\mu^n f \to Ef$ a.e. for every $f \in  L_1(\mathbb T,m)$?}

\noindent
In view of Corollary \ref{har2}, the Problem is when all powers of $\mu$ are singular.
Necessary and sufficient conditions for $P_\mu^n f \to Ef$ a.e. for every 
$f \in  L_p(\mathbb T,m)$,  $\ p>1$, were given in \cite[Theorem 3.6]{ConL}; 
one of them is that for some $j\ge 1$ we have 
\begin{equation} \label{conze}
\sup_{0\ne n \in\mathbb Z} \frac{|1-\hat\mu(n)^j|}{1-|\hat\mu(n)|^j} < \infty.
\end{equation}
Hence \eqref{conze}, with some $j\ge 1$, is necessary for the a.e convergence
$P_\mu^n f \to Ef$  for every $f \in L_1$; is it sufficient? 
Note that \eqref{conze} is satisfied, with $j=1$, when $\|P_\mu^n-E\|_2 \to 0$.
\smallskip

{\bf Remarks.} 1. It was noted in \cite[p. 418]{BJR} that \eqref{conze} with $j=1$
is equivalent to $\{ \hat\mu(k): k\in\mathbb Z\}$, and therefore $\sigma(P_\mu,L_2)$,
being contained in some closed Stolz region\footnote{A closed Stolz region is the closed 
convex hull of $1$ and a disk of radius $r<1$ centered at 0.}.

2. If \eqref{conze} holds for $j=1$, then it holds also for every $j>1$.
However, the example on top of \cite[p. 558]{ConL} shows that \eqref{conze} may hold for
$j=2$ and not for $j=1$. In that example $\mu$ is symmetric, so $P_\mu^nf \to Ef$ a.e. for
every $f \in L_1$, by Theorem \ref{os}.
\smallskip

A special case of  of the problem is: {\it does \eqref{conze} with $j=1$ imply
$P_\mu^nf \to Ef$ a.e. for every $f \in L_1(m)$?}
In particular, {\it if $\mu$ is any Rajchman probability (with singular powers), 
do we have a.e. convergence of $P_\mu^nf$ for every integrable function $f$? }
\bigskip

{\bf Acknowledgements.} The authors thank J.-P. Conze for bringing Borda's paper 
\cite{Bo} to their attention, and are grateful to Y. Derriennic for his helpful comments
on the connections with the paper of Brunel-Revuz \cite{BR}.

\bigskip

\end{document}